\newenvironment{packed_enum}{
\vspace{-3mm}
\begin{enumerate}
}{\end{enumerate}\vspace{-5mm}}
\newcommand{\E}{\mathbb{E}}
\newcounter{cprop}[section]
\newcommand{\mcB}{\mathcal{B}}
\newtheorem{definition}[cprop]{Definition}
\newtheorem{remark}[cprop]{Remark}
\newtheorem{lemma}[cprop]{Lemma}
\newtheorem{example}[cprop]{Example}
\newtheorem{proposition}[cprop]{Proposition}
\newtheorem{theorem}[cprop]{Theorem}
\title{\bf Strong Solutions for Stochastic Partial Differential Equations of Gradient Type}
\author{
\textbf{Benjamin Gess}
\thanks{Supported by DFG-Internationales Graduiertenkolleg “Stochastics and Real World Models”, the SFB-701 and the BiBoS-Research Center. \vskip5pt \textbf{Acknowledgments:} The author would like to thank Professor Michael R\"ockner for valuable discussions and comments.} \\
\small{Faculty of Mathematics, University of Bielefeld, Germany} \\
\small{bgess@math.uni-bielefeld.de}
}
\begin{document}
 
\maketitle\vskip-20pt

\begin{abstract}
 Unique existence of analytically strong solutions to stochastic partial differential equations (SPDE) with drift given by the subdifferential of a quasi-convex function and with general multiplicative noise is proven. The proof applies a genuinely new method of weighted Galerkin approximations based on the ``distance`` defined by the quasi-convex function. Spatial regularization of the initial condition analogous to the deterministic case is obtained. The results yield a unified framework which is applied to stochastic generalized porous media equations, stochastic generalized reaction diffusion equations and stochastic generalized degenerated $p$-Laplace equations. In particular, higher regularity for solutions of such SPDE is obtained.
\end{abstract}

\noindent {\bf 2000 Mathematics Subject Classification AMS}: Primary: 35A15, 60H15. Secondary: 46N10, 76S05, 35J92, 35K57

\noindent {\bf Key words }: stochastic partial differential equations, strong solutions, regularity, subdifferential, stochastic porous medium equation, stochastic reaction diffusion equation, stochastic $p$-Laplace equation.

\bigskip
\maketitle


\section{Introduction}

We prove higher spatial regularity of solutions to SPDE and thereby obtain the unique existence of strong (in the analytic sense) solutions. In general, the regularity of solutions to SPDE is worse than that of their deterministic counterparts. Therefore, instead of considering (analytically) strong solutions, usually weaker notions of solutions, like mild (see e.g. \cite{DPZ92,PZ07} and the references therein) or variational solutions (see e.g. \cite{KR79,PR07} and the references therein) are used. In this paper, we show that for SPDE with drift given by the subgradient of a quasi-convex function and with sufficiently regular noise, i.e. equations of the form
\begin{equation}\label{eqn:SPDE_intro}
  dX_t = -\partial\vp(X_t)dt + B_t(X_t)dW_t,
\end{equation}
where $\vp: H \to \bar\R$ is a quasi-convex function on a Hilbert space $H$, it is in fact possible to prove more regularity for the solutions and thereby to obtain (analytically) strong solutions.

The method we develop and employ to prove the regularity of solutions seems to be new even in the deterministic case. As a special case our results fully contain the analogous results for the deterministic case, in which it is well known that flows generated by gradient type PDE have regularizing properties similar to analytic semigroups (cf.\ e.g.\ \cite{B93}). In this sense the regularity properties obtained here are optimal.

We point out that our results yield a unified framework for semilinear equations like stochastic reaction diffusion equations (SRDE) as well as for quasilinear equations like stochastic porous media equations (SPME). In contrast to our framework, truly quasilinear equations like SPME cannot be treated in the mild approach, while SRDE with high order growth of the nonlinearity do not fit into the variational framework. \\
In the case that both the method established here and the variational approach as presented in \cite{RRW07} apply, our results show that variational solutions are in fact strong solutions.

Applications of our results to stochastic generalized porous media equations, stochastic generalized reaction diffusion equations and stochastic generalized $p$-Laplace equations are given in detail in Section \ref{sec:applications}, also including the case of a not necessarily bounded underlying domain $\mcO \subseteq \R^d$, and covering different types of boundary conditions.
 
We now comment on existing regularity results for SPDE and compare to the results established in this paper. Concerning the regularity of solutions to linear SPDE a lot of work has been done (cf.\ e.g.\ \cite{F90,F95,R90,RS81,K94,DPZ92,BvN03,BvNVW08}), but much less is known for the nonlinear case. Applied to linear SPDE, as e.g.
  \[ dX_t = (\D X_t + f(t)) dt + B_t dW_t \]
our results recover regularity properties obtained in \cite[Theorem 3.1]{F90} under the same conditions as used in our paper, i.e. $X_0 \in H_0^1(\mcO)$ and $B_t(\td e_k) \in L^2([0,T]\times\O;H_0^1(\mcO))$. Note that these conditions are necessary at least for more regular solutions (\cite[Proposition 3.3]{F90}). \\
In \cite{JR10} semilinear equations with non-linear diffusion and Lipschitz nonlinear drift of the form
  \[ dX_t = (AX_t + F(X_t))dt + B_t(X_t)dW_t \]
are considered and under appropriate assumptions on the generator $A$ and the diffusion part $B$ it is shown that for initial values $X_0 \in \mcD(A^\g)$ the solution $X$ also takes values in $ \mcD(A^\g)$, requiring $\g \in [0,\frac{3}{4}]$. By use of the $L^p$-theory for SPDE in \cite{Z07} higher regularity for solutions to semilinear SPDE with nonlinear diffusion and regularization with respect to the initial value by any order less than $\frac{1}{2}$ of $A$ is obtained. Assuming that the drift $A+F$ is a subdifferential of a quasi-convex function and under appropriate assumptions on the diffusion $B$ (in particular trace class noise) these results are partially extended here by proving regularization of the initial condition by one order of $A$ (i.e. $X \in \mcD(A) \subseteq H$ a.s.). Note that in the linear case $A=-\partial\vp$ implies that $A$ is an analytic operator.\\
The only reference known to the author concerning the regularity of solutions to general quasilinear SPDE is \cite{L10}. For nonlinear SPDE fitting into the variational framework invariance of certain subspaces $S$ of the state space $H$ is shown. Roughly speaking, this means that if the initial condition $X_0$ takes values in the subspace $S$ then so does the solution $X$. The proof is based on a Yosida approximation of the norm $\|\cdot\|_H$, which in the special case of stochastic porous media equations (SPME) 
  \[dX_t = \D (|X_t|^{p-1}X_t) dt + B_t(X_t)dW_t,\ p \ge 2 \]
had been used in \cite{RW08} to solve the SPME on $L^2$ satisfying $|X|^\frac{p-1}{2}X \in L^2([0,T]\times\O;H_0^1)$. For the SPME with additive noise the same had been obtained in \cite{K06}. Concerning the SPME with additive noise the strongest regularity result so far is given in \cite[Lemma 3.3]{BGLR10} proving $|X|^{p-1}X \in L^{\frac{p+1}{p}}([0,T];H_0^{1,\frac{p+1}{p}})$. Our results are applicable to general quasilinear SPDE perturbed by general multiplicative noise and yield the unique existence of strong solutions, in particular $A(X) \in L^2([0,T]\times\O;H)$, $H=(H_0^1)^*$) and spatial regularization of the initial condition in the sense that for $X_0 \in L^2(\O;H)$ we obtain a solution $X$ satisfying $ t^\frac{1}{2} A(X_t) \in L^2([0,T]\times\O;H)$. In the special case of the SPME the regularity results in \cite{L10,RW08,K06,BGLR10} mentioned above are thus strengthened to 
  \[ t^\frac{1}{2}|X_t|^{p-1}X_t \in L^2([0,T]\times\O;H_0^1(\mcO)),\]
under appropriate assumptions on the noise. For more details on existing regularity results for solutions to concrete SPDE and comparison to the results obtained here we refer to Section \ref{sec:applications} below.

We now give a brief outline of the main idea of the proof. The regularity analysis of solutions to deterministic PDE of gradient type (cf.\ e.g.\ \cite{B93}, also for notation) is based on a chain rule for absolutely continuous functions $u \in W^{1,2}([0,T];H)$ composed with a convex, lower semicontinuous (l.s.c.) function $\vp: H \to \bar \R$ with $u(t) \in \mcD(\partial \vp)$ for almost every $t\in [0,T]$, stating that $\vp \circ u$ is absolutely continuous and 
\begin{equation}\label{eqn:det_chain}
   \frac{d}{dt} \vp \circ u = \left( g(t),\frac{du}{dt}(t) \right)_H,
\end{equation}  
were $g(t) \in \partial\vp(u(t))$ almost surely. The main difficulty in the stochastic case thus stems from the lack of a sufficiently general It\^o formula justifying \eqref{eqn:det_chain} in an integral sense for SPDE. The deterministic chain rule mentioned above is easily proven by a Moreau-Yosida approximation $\vp_\l$ of the convex function $\vp$ and by applying the classical chain rule. In the stochastic case, however, this method fails due to the occurrence of the second derivative $D^2\vp_\l$ of the Moreau-Yosida approximation of $\vp$ in It\^o's formula. To proceed further, uniform bounds on $D^2\vp_\l$ would have to be shown, which seems to be difficult. \\
Therefore, we establish an alternative method of approximation which is not based on a regularization of $\vp$ so that only bounds on $D^2\vp$ (not on its approximation $D^2\vp_\l$) are needed. For an orthonormal basis $e_k$ of $H$ the usual Galerkin approach is to project the equation from $H$ to a finite dimensional equation on $H_n := $ span $\{e_1,...,e_n\}$ by applying the orthogonal projection $P_n: H \to H_n$. The key point of this approximation is that the orthogonal projection $P_n$ preserves the monotonicity of $\partial\vp$ on $H$. However, this projection is only based on the geometry of $H$ and does not fully reflect the geometry of the equation which is induced by the convex function $\vp$. For example, in general $P_n$ is not bounded with respect to $\vp$, i.e.\ $\vp(P_n h) \not\le C \vp(h),\ \forall h \in H$ and the projection does not converge to the identity with respect to $\vp$, i.e.\ $\vp(P_n h - h) \not\to 0$, for $n \to \infty$. Therefore, for the approximation of the initial condition and the noise we instead apply the best approximation $\mcP_{\vp,n}: H \to H_n$ weighted by $\vp$, i.e.
  \[ \vp(\mcP_{\vp,n} h -h) = \inf_{g \in H_n} \vp(g-h). \]
The point is that this approximation respects $\vp$ in the sense that
  \[ \vp(\mcP_{\vp,n} h) \le C \vp(h),\ \forall h \in H \]
and the convergence $\mcP_{\vp,n} h \to h$ holds with respect to $\vp$, i.e.
\begin{align*}
  \vp(\mcP_{\vp,n} h - h) &\to 0, \\
  \vp(\mcP_{\vp,n} h)     &\to \vp(h),\ \forall h \in H.
\end{align*}

In Section \ref{sec:main_result} we will establish the framework and state the main results. If the initial condition is contained in the domain of the quasi-convex function $\vp$, Theorem \ref{thm:main} asserts the existence of a unique strong solution to \eqref{eqn:SPDE_intro} and provides a-priori bounds for this solution. The proof is given in Section \ref{sec:proof_main_thm}. If the initial condition is not contained in the domain of $\vp$ but only $X_0 \in L^2(\O,\mcF_0;H)$ we first prove the unique existence of solutions in a limiting sense (Theorem \ref{thm:limit_soln}) and then identify these limit solutions as strong solutions to \eqref{eqn:SPDE_intro} in some generalized sense (Theorem \ref{thm:regularity}). In particular, regularization of the initial condition is shown. The proofs are given in Section \ref{sec:limit_soln}. The application of our results to SPME, SRDE and stochastic $p$-Laplace equations is given in Section \ref{sec:applications}.

\section{Setup and Main Results}\label{sec:main_result}
Let $H$ be a separable Hilbert space, $V$ be a reflexive Banach space and $S$ be a Banach space (note that not necessarily $S \subseteq H$, nor $S$ reflexive). Assume that there are dense embeddings $V \hookrightarrow S, H$ and a constant $C > 0$ such that $\|\cdot\|_V \le C(\|\cdot\|_S+\|\cdot\|_H).$ In the applications (cf. Section \ref{sec:applications}) we always have $V = $``$S \cap H$'' in some appropriate sense. By reflexivity of $V$ this yields the Gelfand triple $V \subseteq H \subseteq V^*$. In particular, $V$ is separable and we can choose $\{e_k \in V|\ k \in \N\}$ to be an orthonormal basis of $H$ such that $V_0 = $ span$\{e_1, \cdots\}$ is dense in $V$. Define $H_n := $ span$\{e_1,\cdots,e_n\}$. Let $U$ be another separable Hilbert space, $(\O,\F,\F_t,\P)$ be a complete probability space with normal filtration $\F_t$, $W_t$ be a cylindrical Brownian motion on $U$ and $B: [0,T] \times \O \times H \to L_2(U,H)$ be progressively measurable.

We now introduce the main assumptions under which we will prove the unique existence of strong solutions to SPDE of the form \eqref{eqn:SPDE_intro}. Although some of the conditions might look unfamiliar at the first sight, all of them are easy to prove in the applications (Section \ref{sec:applications}).

\begin{packed_enum}
 \item[(A1)] 
   Let $\vp: S \to \R$ be a continuous function. Assume that there exists a convex, continuous, subhomogeneous (i.e.\ $\exists\ C > 0$ such that $\td\vp(2x) \le C \td\vp(x)$ for all $x \in S$) function $\td\vp: S \to \R_+$ with bounded level-sets such that $\td\vp(v_n) \to 0$ implies $v_n \to 0$ in $S$ and that there are constants $C_1 > 0$, $C_2 \in \R$ so that
   \begin{align*}
        C_1 \td\vp(v)- C_2 \le \vp(v) &\le C_2 (\td\vp(v)+\frac{1}{2}\|v\|_H^2+1),
    \end{align*}
    for all $v \in V$.
\end{packed_enum}  

Via the embeddings $V \hookrightarrow S, H$ we may define $\vp_\l(v) := \vp(v)+\frac{\l}{2}\|v\|_H^2$ and $\td\vp_\l(v) := \td\vp(v)+\frac{\l}{2}\|v\|_H^2$, for $v \in V$, $\l \in \R$. 

\begin{packed_enum}
  \item[(A2)] Assume that $\vp$ is Gateaux differentiable on $V$ with $A := -D\vp: V \to V^*$ being hemicontinuous and that $\vp \in C^2(H_n)$ for all $n \in \N$, such that there are constants $1 = p_0 \le p_2 \le ... \le p_N$, $N \in \N$ for which
      \[ \sum_{k=1}^\infty D^2\vp(x)(w_k,w_k) \le C \left(1+\vp(x)+\sum_{i=0}^N\left(\sum_{k=1}^\infty \td\vp_1(w_k)^\frac{1}{p_i} \right)^{p_i} \right), \]
       for each sequence $w_k \in H_n$, all $x \in H_n$ and some constant $C \in \R$ (independent of $n$).
\end{packed_enum}  

We can consider $\vp,\td\vp,\td\vp_\l,\vp_\l$ as functions on $H$ by extending them by $+\infty$ from $V$ to $H$.

\begin{packed_enum}
\item[(A3)]  
 Assume that  $\vp: H \to \bar\R$ is quasi-convex (i.e. $\exists \l \ge 0$ such that $\vp_\l$ is convex). 
\end{packed_enum}  

The notion of the subgradient is extended to quasi-convex functions in the obvious way. In particular, we have $\partial \vp =  \partial \vp_\l - \l x$. We extend $\partial \vp: \mcD(\partial\vp) \subseteq V \to H$ by $0$ to all of $H$. Note that $\mcD(\partial\vp) = \{v \in V|\ D\vp(v) \in H\}$ and $\partial\vp = D\vp_{|\mcD(\partial\vp)}$ (in particular $\partial\vp(v)$ is single-valued for all $v \in \mcD(\partial\vp)$). Boundedness of level-sets of $\td\vp: V \to \R$, reflexivity of $V$ and weak l.s.c.\ of $\vp_\l$ imply
\begin{remark}
  Assume (A1), (A3). Then $\vp, \td\vp: H \to \bar\R $ are lower semicontinuous.
\end{remark}

\begin{packed_enum}
  \item[(A4)] (Weak coercivity): There exists a $C \in \R$ such that
    \[ 2 \Vbk{-D \vp (v),v} \le C (1+\|v\|_H^2),\] 
  for all $v \in V$.
 \item[(A5)] (Lipschitz noise):  There is a constant $c \ge 0$ such that
        \[ \|B_t(v)-B_t(w)\|_{L_2(U,H)}^2 \le c \|v-w\|_H^2,\]
        for all $v,w \in V$.
 \item[(A6)] (Regularity of the noise): There is an orthonormal basis $\td e_k$ of $U$, such that
        \[ \|B_t(v)\|_{L_{2,\td\vp_1},(p_i)} := \sum_{i=0}^N \left( \sum_{k=1}^\infty \td\vp_1(B_t(v)(\tilde{e_k}))^\frac{1}{p_i} \right)^{p_i} \le C(f_t+\vp(v)+ \|v\|_H^2), \]
        where $p_i$ are as in (A2), $C > 0$ is a constant and $f_t \in L^1([0,T]\times\O)$ is an $\mcF_t$-adapted process.
\end{packed_enum}

As mentioned above all these conditions are easily verified in the applications and pose a unified framework for quasilinear equations like porous media equations and reaction diffusion equations with high order reaction terms. Note that we neither require strict coercivity of $\vp$, nor polynomially bounded growth of $\vp$. Since 
\begin{align*}
  \Vbk{-D\vp(v),v} &= -\Vbk{ D\vp_\l(v)-D\vp_\l(0),v-0} - \Vbk{D\vp_\l(0),v} + \l \|v\|_H^2 \\
		   &\le - \Vbk{D\vp_\l(0),v} + \l \|v\|_H^2
\end{align*}
we have 
\begin{remark}[Weak coercivity]
   Assume (A3) and $0 \in \mcD(\partial\vp)$. Then (A4) is satisfied.  
\end{remark}

In the following we will consider SPDE of the form
\begin{equation}\label{eqn:SPDE}
  dX_t = - \partial\vp(X_t) dt + B_t(X_t) dW_t,\text{ on } [0,T].
\end{equation}
Let us first state what we mean by a strong solution.

\begin{definition}[Strong solution]
  Let $X_0 \in L^2(\O,\mcF_0;H)$. An $\F_t$-adapted, continuous process $X$ in $H$ with $X \in L^2(\O;C([0,T];H))$ and $\partial\vp(X) \in L^2([0,T]\times\O;H)$ is a strong solution to \eqref{eqn:SPDE} if $\P$-a.s. 
  \begin{equation}\label{eqn:SPDE_strong}
      X_t = X_0 - \int_0^t \partial\vp(X_r) dr + \int_0^t B_r(X_r) dW_r,\ \forall t \in [0,T],
  \end{equation}
  as an equation in $H$. 
\end{definition}

\begin{theorem}\label{thm:main} 
  Assume (A1)-(A6). Then for each initial condition $X_0 \in L^2(\O,\mcF_0;H)$ satisfying $\E\left(\vp(X_0)+\|X_0\|_H^2 \right) < \infty$ there is a unique strong solution $X$ to \eqref{eqn:SPDE} with
    \[ \vp(X)+\|X\|_H^2 \in L^\infty([0,T];L^1(\O)).\]
  Let $X^{(i)}_0 \in L^2(\O,\mcF_0;H)$ and let $X^{(i)}$ be corresponding strong solutions to \eqref{eqn:SPDE}, $i=1,2$. Then
  \begin{align}\label{eqn:dep_ic}
     \E \sup_{t\in[0,T]}\|X^{(1)}_t-X^{(2)}_t\|_H^2 \le C \E\|X^{(1)}_0-X^{(2)}_0\|_H^2.
  \end{align}
\end{theorem}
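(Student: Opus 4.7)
The plan is to construct the solution via a weighted Galerkin approximation on $H_n$ using $\mcP_{\vp,n}$, and to derive the decisive a priori estimate by applying It\^o's formula to $\vp(X^n_t)$, which is admissible since $\vp \in C^2(H_n)$ by (A2). Concretely, I would solve the finite-dimensional SDE on $H_n$
\begin{equation*}
dX^n_t = -P_n D\vp(X^n_t)\,dt + B^n_t(X^n_t)\,dW_t, \qquad X^n_0 = \mcP_{\vp,n} X_0,
\end{equation*}
where $B^n_t(x)\td e_k := \mcP_{\vp,n}(B_t(x)\td e_k)$ for $k \le n$ and $0$ otherwise. Unique existence of $X^n$ follows from standard finite-dimensional SDE theory, using (A3) and hemicontinuity of $D\vp$ for quasi-monotonicity of the drift and (A5) for the noise.

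The heart of the proof is the uniform-in-$n$ a priori bound. An It\^o expansion of $\|X^n_t\|_H^2$ combined with (A4)--(A5), BDG, and Gronwall yields $\E\sup_{t\le T}\|X^n_t\|_H^2 \le C(1+\E\|X_0\|_H^2)$. The essential step is then applying It\^o to $\vp(X^n_t)$,
\begin{equation*}
\vp(X^n_t) + \int_0^t \|P_n D\vp(X^n_s)\|_H^2\,ds = \vp(X^n_0) + M_t + \tfrac12\int_0^t \sum_k D^2\vp(X^n_s)\bigl(B^n_s(X^n_s)\td e_k, B^n_s(X^n_s)\td e_k\bigr)\,ds,
\end{equation*}
with $M$ a local martingale. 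Assumption (A2) bounds the $D^2\vp$-sum by $C(1+\vp(X^n_s)+\sum_i(\sum_k\td\vp_1(B^n_s(X^n_s)\td e_k)^{1/p_i})^{p_i})$; here the weighted projection is crucial, since the key property $\vp(\mcP_{\vp,n}h)\le C\vp(h)$ together with subhomogeneity of $\td\vp$ and (A6) applied to the unprojected $B_t(X^n_s)\td e_k$ controls this expression by $C(f_s+\vp(X^n_s)+\|X^n_s\|_H^2)$, and likewise gives $\vp(X^n_0)\le C\vp(X_0)$. BDG on $M$ and Gronwall then produce the uniform bound
\begin{equation*}
\E\sup_{t\le T}\vp(X^n_t) + \E\int_0^T \|P_n D\vp(X^n_s)\|_H^2\,ds \le C\bigl(1+\E\vp(X_0)+\E\|X_0\|_H^2\bigr).
\end{equation*}
This is the principal obstacle: conventional orthogonal Galerkin projections do not preserve the geometry of $\vp$ and would force uniform control of $D^2\vp_\l$ on a Moreau--Yosida regularisation, which is unavailable; the whole point of $\mcP_{\vp,n}$ is to sidestep this.

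With the above bounds in hand I extract weakly convergent subsequences $X^n \rightharpoonup X$ and $P_n D\vp(X^n) \rightharpoonup \eta$ in $L^2([0,T]\times\O;H)$. Strong convergence $X^n \to X$ in $L^2([0,T]\times\O;H)$ is obtained as a Cauchy property: applying It\^o to $e^{-Kt}\|X^n_t-X^m_t\|_H^2$ with $K$ depending on $\l$ from (A3) and the Lipschitz constant $c$ of (A5), monotonicity of $\partial\vp_\l$ disposes of the drift bracket on $H_n \cap H_m$, (A5) controls the noise bracket, and the convergences $\mcP_{\vp,n}X_0 \to X_0$, $B^n \to B$ in the relevant $L_2(U,H)$-sense yield the Cauchy estimate. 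To identify $\eta = \partial\vp(X)$ I apply a Minty-type monotonicity argument: for any $v \in V_0$ and $n$ so large that $v\in H_n$, $\langle P_n D\vp_\l(X^n)-P_n D\vp_\l(v),X^n-v\rangle_H \ge 0$; passing $n\to\infty$, then taking $v = X-\varepsilon w$ for $w\in V_0$, letting $\varepsilon\to 0$, and using hemicontinuity of $D\vp$ gives $\eta+\l X \in \partial\vp_\l(X)$, i.e.\ $\eta=\partial\vp(X)$.

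Uniqueness and \eqref{eqn:dep_ic} follow simultaneously by applying It\^o to $e^{-Kt}\|X^{(1)}_t-X^{(2)}_t\|_H^2$ for two strong solutions: monotonicity of $\partial\vp_\l$ eliminates the drift bracket, (A5) controls the quadratic variation, and choosing $K>2\l+c$ followed by BDG and Gronwall produces exactly \eqref{eqn:dep_ic}; uniqueness is then the special case $X^{(1)}_0 = X^{(2)}_0$.
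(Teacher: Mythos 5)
Your overall architecture --- weighted Galerkin with $\mcP_{\vp,n}$, It\^o's formula applied to $\vp(X^n_t)$ using (A2) together with $\td\vp_1(\mcP_{\vp,n}h)\le C\,\td\vp_1(h)$ to get the uniform bound on $P_nD\vp(X^n)$, then extraction of weak limits --- is exactly the paper's key idea, and your uniqueness/continuous-dependence argument via It\^o on $\|X^{(1)}-X^{(2)}\|_H^2$ is the paper's. But three steps have genuine gaps. First, you run the Galerkin scheme directly with multiplicative noise, with diffusion coefficient $x\mapsto \mcP_{\vp,n}(B_t(x)\td e_k)$. The map $\mcP_{\vp,n}$ is a $\td\vp_1$-best-approximation map, not an orthogonal projection; it is merely continuous, so the composition need not inherit any Lipschitz or monotonicity property in $x$. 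Consequently neither the finite-dimensional existence theorem you invoke nor your later (A5)-based estimates apply to $B^n$. The paper avoids this by first solving the \emph{additive}-noise equation (where $\mcP_{\vp,n}B_r$ is just a process and no Lipschitz property of $\mcP_{\vp,n}$ is needed) and then obtaining the multiplicative case as a Banach fixed point of the map sending $Y$ to the solution with frozen noise $B_\cdot(Y_\cdot)$ on a small time interval, followed by continuation.

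Second, your Cauchy estimate for $X^n-X^m$ does not go through: for $m<n$ the drift bracket equals $\langle D\vp(X^n),X^n-X^m\rangle-\langle D\vp(X^m),P_mX^n-X^m\rangle$, and monotonicity applies only after replacing $P_mX^n$ by $X^n$; the residual term $\langle D\vp(X^m),(I-P_m)X^n\rangle$ is a genuine $V^*$--$V$ pairing for which (A1)--(A6) give no uniform bound (there is no growth condition on $D\vp$ in $V^*$) and no smallness of $(I-P_m)X^n$ in $V$. The paper never claims strong convergence of the Galerkin sequence; it works with weak limits only. Third, and for the same reason, the last step of your Minty argument --- inserting $v=X-\varepsilon w$ and invoking hemicontinuity --- is not justified here, since $\langle D\vp_\l(X-\varepsilon w),w\rangle$ need not even be integrable. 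The paper replaces Minty by a convex-analysis argument: it shows that $e^{-2\l\cdot}(\eta+\l X)$ belongs to the subdifferential of the integral functional $Z\mapsto \E\int_0^T e^{-2\l r}\vp_\l(Z_r)\,dr$ at $X$, testing only against $Z^m=\mcP_{\vp,m}Z\in H_m$, so that every pairing with $D\vp_\l(X^n)$ reduces to an $H$-inner product with $P_nD\vp_\l(X^n)$ and only the function values $\vp_\l(Z^m)\le C(1+\vp_\l(Z))$ need to be integrable. You would need to repair all three points (or adopt the additive-noise-plus-fixed-point route and the subdifferential identification) for the proof to close.
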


So far we have obtained strong solutions for initial conditions taking values in $V \subseteq H$. By monotonicity it is possible to obtain solutions to \eqref{eqn:SPDE} for arbitrary initial conditions $X_0 \in L^2(\O,\mcF_0;H)$ at least in a limit sense. 

\begin{definition}
  Let $X_0 \in L^2(\O,\mcF_0;H)$. We call an $H$-valued, $\F_t$-adapted, continuous process $X \in L^2(\O;C([0,T];H))$ a limit solution to \eqref{eqn:SPDE} if $X(0) = X_0$ and if there exists an approximating sequence $X^n \in L^2(\O;C([0,T];H))$ of strong solutions to \eqref{eqn:SPDE} (with initial conditions $X_0^n=X^n(0)$) such that $X^n \to X  \text{ in } L^2(\O;C([0,T];H)).$
\end{definition}

\begin{theorem}\label{thm:limit_soln}
  Let (A1)-(A6) hold and $X_0 \in L^2(\O,\mcF_0;H)$. Then there exists a unique limit solution $X \in L^2(\O;C([0,T];H))$ to \eqref{eqn:SPDE}. For two limiting solutions $X^{(i)}$ with corresponding initial conditions $X^{(i)}_0 \in L^2(\O,\mcF_0;H)$, $i=1,2$,
  \begin{equation}\label{eqn:limit_soln_ctn}
    \E \sup_{t \in [0,T]} \|X^{(1)}_t-X^{(2)}_t\|_H^2 \le C \E\|X^{(1)}_0-X^{(2)}_0\|_H^2,
  \end{equation}
  for some constant $C \ge 0$.
\end{theorem}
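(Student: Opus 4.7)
My plan is to obtain the limit solution by applying Theorem \ref{thm:main} to a sequence of regularized initial conditions and then extracting the limit using the Lipschitz estimate \eqref{eqn:dep_ic}. The first step is to construct, for any $X_0 \in L^2(\O,\mcF_0;H)$, an $\mcF_0$-measurable approximating sequence $X_0^n \to X_0$ in $L^2(\O;H)$ satisfying $\E[\vp(X_0^n) + \|X_0^n\|_H^2] < \infty$. A natural choice is the resolvent of the convex, proper, l.s.c.\ function $\vp_\l$ (with $\l$ as in (A3)), namely
\begin{equation*}
  X_0^n := \bigl(I + \tfrac{1}{n}\partial\vp_\l\bigr)^{-1} X_0.
\end{equation*}
Since this resolvent is a contraction on $H$, hence Borel measurable, $X_0^n$ is $\mcF_0$-measurable; it takes values in $\mcD(\partial\vp_\l) = \mcD(\partial\vp) \subseteq V$, and the standard Moreau--Yosida bound $\vp_\l(X_0^n) \le \vp_\l(0) + \tfrac{n}{2}\|X_0\|_H^2$ gives the required integrability. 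Density of $V$ in $H$ yields $\overline{\mcD(\vp_\l)} = H$, so $X_0^n \to X_0$ in $H$ pointwise, and the contraction bound $\|X_0^n\|_H \le \|X_0\|_H + C$ combined with dominated convergence upgrades this to convergence in $L^2(\O;H)$.

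Theorem \ref{thm:main} then produces for each $n$ a strong solution $X^n \in L^2(\O;C([0,T];H))$ with initial condition $X_0^n$. Applying \eqref{eqn:dep_ic} to pairs $(X^n,X^m)$ yields
\begin{equation*}
  \E \sup_{t \in [0,T]} \|X_t^n - X_t^m\|_H^2 \le C\, \E\|X_0^n - X_0^m\|_H^2 \longrightarrow 0,
\end{equation*}
so $\{X^n\}$ is Cauchy in the Banach space $L^2(\O;C([0,T];H))$ and converges to some $X$ which, by construction, is a limit solution with $X(0)=X_0$. For uniqueness and \eqref{eqn:limit_soln_ctn}, given two limit solutions $X^{(i)}$ with approximating sequences $X^{(i),n}$ of strong solutions, the $L^2(\O;C([0,T];H))$-convergence forces $X_0^{(i),n} \to X_0^{(i)}$ in $L^2(\O;H)$; applying \eqref{eqn:dep_ic} to $(X^{(1),n},X^{(2),n})$ and passing $n\to\infty$ on both sides (which is legal because $a \mapsto \|a\|^2_{L^2(\O;C([0,T];H))}$ is continuous on this Banach space) delivers \eqref{eqn:limit_soln_ctn}, with uniqueness being the special case of equal initial data.

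The main obstacle is precisely the construction in the first step — producing an $\mcF_0$-measurable sequence that converges to $X_0$ in $L^2(\O;H)$ while simultaneously lying in the domain of $\vp$ with integrable $\vp$-value, so that Theorem \ref{thm:main} is applicable. Once this sequence is in place, everything else is a mechanical consequence of \eqref{eqn:dep_ic} and the completeness of $L^2(\O;C([0,T];H))$.
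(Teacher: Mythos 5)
Your argument is correct and follows the paper's strategy at every structural step: approximate $X_0$ by $\mcF_0$-measurable data lying in $V=\mcD(\vp_\l)$ with integrable $\vp$-value, invoke Theorem \ref{thm:main} for each approximant, and conclude via the Lipschitz estimate \eqref{eqn:dep_ic} together with completeness of $L^2(\O;C([0,T];H))$; your uniqueness/continuity step at the end is precisely how the paper deduces \eqref{eqn:limit_soln_ctn}. The only genuine difference is the device for approximating the initial datum. The paper sets $X_0^n := P_n X_0$ with $P_n$ the metric ($\|\cdot\|_H$-best-approximation) projection onto the closed convex sublevel set $G_n := \{h \in H \mid \td\vp_1(h) \le n\}$, which yields the deterministic bound $\td\vp_1(X_0^n)\le n$, the estimate $\|X_0^n\|_H \le 2\|X_0\|_H$ because $0 \in G_n$, and pointwise convergence from density of $\bigcup_n G_n = V$ in $H$. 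You instead use the proximal map $X_0^n=(I+\tfrac1n\partial\vp_\l)^{-1}X_0$, obtaining the $\o$-dependent bound $\vp_\l(X_0^n)\le \vp_\l(0)+\tfrac n2\|X_0\|_H^2$, which is still integrable since $X_0\in L^2(\O;H)$, and convergence from $\overline{\mcD(\vp_\l)}=H$. Both routes deliver exactly the hypotheses Theorem \ref{thm:main} needs, so this is a legitimate variant rather than a gap. One point worth phrasing carefully: the paper defines $\partial\vp_\l$ as the restriction of the Gateaux derivative $D\vp_\l$ to $\{v\in V:\ D\vp_\l(v)\in H\}$, which need not a priori coincide with the convex-analytic subdifferential whose resolvent you invoke; this is harmless, because everything you use ($X_0^n\in\mcD(\vp_\l)=V$, the energy bound, nonexpansiveness, measurability, convergence) follows from the variational characterization of the proximal map of the convex l.s.c.\ function $\vp_\l$ alone, but the construction should be stated in those terms rather than through the paper's restricted operator.
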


Note that we did not yet identify limit solutions in terms of some generalized notion of solution to \eqref{eqn:SPDE}. Moreover, uniqueness of limit solutions is only proven with respect to the specific choice of approximation via strong solutions. Other approximation methods could yield solutions in a limit sense which might be different from those obtained in Theorem \ref{thm:limit_soln}. We thus aim to introduce a generalized notion of a strong solution to \eqref{eqn:SPDE} and to prove that the limit solutions are in fact solutions to \eqref{eqn:SPDE} in this generalized sense. 

For comparison and to fix ideas let us recall the regularizing properties of deterministic gradient flows. Let $\vp: H \to \bar\R$ be a convex, l.s.c.\ function. For deterministic gradient flows in Hilbert spaces it is well known (cf.\ e.g.\ \cite[Theorem 1.10.]{B93}) that even for initial conditions $y_0 \in \overline{\mcD(\vp)}$ (and $f \in L^2([0,T];H)$) there exists a unique mild solution $y \in C([0,T];H)$ to
\begin{align*}
  \frac{dy}{dt} + \partial\vp(y) &= f(t), \text{ in } (0,T) \\
  y(0) &= y_0,
\end{align*}
such that for each $0<\d <T$, $y \in W^{1,2}([\d,T];H)$, $t^\frac{1}{2}\frac{dy}{dt} \in L^2([0,T];H)$, $\vp(y) \in L^1([0,T])$, $y(t) \in \mcD(\partial\vp)$ for a.e. $t \in (0,T)$ and
\begin{align}\label{eqn:determ_reg}
   \frac{dy}{dt}(t) + \partial\vp(y(t)) = f(t), \text{ a.e. } t \in (0,T).
\end{align} 

Since not necessarily $\frac{dy}{dt} \in L^1([0,T];H)$ we cannot pass to the integrated form of \eqref{eqn:determ_reg} over all of $[0,T]$ and thus we cannot expect an analytically strong solution in the stochastic case. However, since $\frac{dy}{dt} \in L^1([\d,T];H)$ for each $\d > 0$, we can expect the limit solution to be a strong solution on each interval $[\d,T]$. Using this property as a definition of a generalized strong solution, we may identify limit solutions as solutions to the SPDE \eqref{eqn:SPDE} and thereby prove uniqueness of limit solutions independent of the approximation. Moreover, we prove that limit solutions enjoy regularizing properties analogous to the deterministic case.

\begin{definition}
  Let $X_0 \in L^2(\O,\mcF_0;H)$. An $H$-valued, $\F_t$-adapted, continuous process $X \in L^2(\O;C([0,T];H))$ is said to be a strong solution to \eqref{eqn:SPDE} on $(0,T]$ if $X(0)=X_0$, $\partial\vp(X) \in L^2([\d,T]\times\O;H)$ and $\P$-a.s.
  \begin{equation}\label{eqn:SPDE_start_d}
    X_t = X_\d - \int_\d^t \partial\vp(X_r)dr + \int_\d^t B_r(X_r)dW_r,
  \end{equation}
  for all $t \in [\d,T]$ and all $0 < \d < T$.
\end{definition}

In order to prove regularity for the limit solution we need to require the following stronger coercivity property:
\begin{packed_enum}
 \item[(A4')] There exist constants $C_1 > 0$, $C_2 \in \R$ such that
  \[ 2 \Vbk{- D \vp(v),v } \le C_2(1+\|v\|_H^2) - C_1\vp(v), \]
  for all $v \in V$.
\end{packed_enum}

\begin{theorem}\label{thm:regularity}
  Assume (A1)-(A6), (A4'). Let $X_0 \in L^2(\O,\mcF_0;H)$ and $X \in L^2(\O;C([0,T];H))$ be the corresponding limit solution. Then
  \begin{align*}
    \vp(X) + \|X\|_H^2             &\in L^1([0,T] \times \O), \\
    t^\frac{1}{2} \partial\vp(X_t) &\in L^2([0,T]\times\O;H)
  \end{align*}
  and $X$ is the unique strong solution to \eqref{eqn:SPDE} on $(0,T]$.
\end{theorem}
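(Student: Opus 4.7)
My plan is to derive the regularity via a regularization of the initial data, uniform a-priori bounds along the corresponding strong solutions, and a monotonicity-based identification of the limit.

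First, I would choose an approximating sequence $X_0^{(n)} \in L^2(\O,\F_0;H)$ with $X_0^{(n)} \to X_0$ in $L^2(\O;H)$ and $\E(\vp(X_0^{(n)}) + \|X_0^{(n)}\|_H^2) < \infty$, for instance $X_0^{(n)} := \mcP_{\vp,n} X_0$ using the weighted projection described in the Introduction; its defining bound $\vp(\mcP_{\vp,n} h) \le C\vp(h)$ together with $\mcP_{\vp,n} h \to h$ makes this approximation compatible with the hypotheses. Theorem \ref{thm:main} then supplies strong solutions $X^{(n)}$ of \eqref{eqn:SPDE} with initial conditions $X_0^{(n)}$, and Theorem \ref{thm:limit_soln} yields $X^{(n)} \to X$ in $L^2(\O;C([0,T];H))$.

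Next, I would derive two uniform a-priori bounds. Applying It\^o's formula to $\|X^{(n)}_t\|_H^2$ and using (A4') together with (A5), I obtain
\[
   \E\|X^{(n)}_T\|_H^2 + C_1 \E\int_0^T \vp(X^{(n)}_t)\,dt \le \E\|X_0^{(n)}\|_H^2 + C\E\int_0^T(1 + \|X^{(n)}_t\|_H^2)\,dt,
\]
and a Gronwall-type argument (on the $\|X^{(n)}\|_H^2$ term, using (A4)) yields the uniform $L^1$ bound on $\vp(X^{(n)})$. For the time-weighted estimate I would apply It\^o's formula to $\vp(X^{(n),m}_t)$ at the Galerkin level used to construct $X^{(n)}$, where the smoothness $\vp \in C^2(H_m)$ comes from (A2). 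The drift contribution is $-\|\partial\vp(X^{(n),m}_t)\|_H^2$, while the trace term is controlled by (A2) combined with (A6), giving a bound of the form $C(1+\vp(X^{(n),m}_t)+\|X^{(n),m}_t\|_H^2+f_t)$. Integrating by parts in $t$ (i.e.\ applying the product rule to $t\,\vp(X^{(n),m}_t)$), taking expectation and passing to the limit in $m$ produces
\[
   \E\bigl[t\,\vp(X^{(n)}_t)\bigr] + \E\!\int_0^t\! s\,\|\partial\vp(X^{(n)}_s)\|_H^2\,ds \le \E\!\int_0^t\! \vp(X^{(n)}_s)\,ds + C\,\E\!\int_0^t\! s\bigl(1 + \vp(X^{(n)}_s)+\|X^{(n)}_s\|_H^2+f_s\bigr)ds,
\]
which together with the first bound gives $\E\int_0^T t\|\partial\vp(X^{(n)}_t)\|_H^2\,dt \le C$ uniformly in $n$.

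Finally, I would pass to the limit. By reflexivity, along a subsequence $t^{1/2}\partial\vp(X^{(n)}_t)$ converges weakly in $L^2([0,T]\times\O;H)$ to some $\xi_t$. Since $X^{(n)} \to X$ strongly in $L^2(\O;C([0,T];H))$ and $\partial\vp_\l = \partial\vp + \l\,\mathrm{id}$ is maximal monotone on $H$, a demiclosedness argument combined with the lower semicontinuity of $\vp$ identifies $\xi_t = t^{1/2}\partial\vp(X_t)$, yielding the claimed $L^2$ regularity; the $L^1$ integrability of $\vp(X)+\|X\|_H^2$ follows by Fatou. Passing to the limit in
\[ X^{(n)}_t = X^{(n)}_\d - \int_\d^t \partial\vp(X^{(n)}_r)\,dr + \int_\d^t B_r(X^{(n)}_r)\,dW_r,\qquad t \in [\d,T], \]
where weak $L^2$-convergence of $\partial\vp(X^{(n)})$ on $[\d,T]$ holds because $t \ge \d > 0$ and (A5) handles the stochastic integral, shows that $X$ satisfies \eqref{eqn:SPDE_start_d}; uniqueness on each $[\d,T]$ follows from the monotonicity computation already used in Theorem \ref{thm:main}, and uniqueness on $[0,T]$ then from continuity of $X$ at $0$. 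The main obstacle I anticipate is precisely this identification step: as $\vp$ is only quasi-convex (so that $\partial\vp_\l$, not $\partial\vp$, is the monotone object) and the bound on $\partial\vp(X^{(n)})$ degenerates at $t=0$, the Minty-type passage has to be carried out on each $[\d,T]$ separately, with care that the $\l$-correction from the quasi-convex shift is compatible with the weak-strong identification.
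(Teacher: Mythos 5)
Your overall strategy is close in spirit to the paper's up to the a-priori estimates, but it contains one concrete error and then diverges at the decisive identification step. The error: for $X_0 \in L^2(\O,\mcF_0;H)$ the value $\vp(X_0)$ is generically $+\infty$ (handling exactly such data is the point of the theorem), so the bound $\vp(\mcP_{\vp,n}h) \le C\vp(h)$ is vacuous, and $\mcP_{\vp,n}$ --- defined only on $V$ as the $\td\vp_1$-best approximation --- is not even well defined at $X_0$. The paper instead sets $X_0^n := P_n X_0$, the $\|\cdot\|_H$-metric projection onto the closed convex sublevel sets $G_n = \{h \in H \,|\, \td\vp_1(h) \le n\}$, which gives $\E\,\td\vp_1(X_0^n) \le n < \infty$, $\|X_0^n\|_H \le 2\|X_0\|_H$ and $X_0^n \to X_0$ in $L^2(\O;H)$; with this replacement (or simply with the approximating sequence furnished by the definition of a limit solution) your estimates go through, since they only need $\E\vp_1(X_0^{(n)}) < \infty$ for each fixed $n$ plus a uniform bound on $\E\|X_0^{(n)}\|_H^2$. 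A smaller bookkeeping point: the Galerkin scheme underlying $X^{(n)}$ carries the additive noise $B_r(X^{(n)}_r)$ frozen at the fixed point of the contraction, not $B_r(X^{(n),m}_r)$, so the trace term is controlled by (A6) evaluated at $X^{(n)}$ rather than at the Galerkin iterate; this does not change the final bound but is how Lemmas \ref{lemma:add_r12_bound} and \ref{lemma:12_strong} actually close the estimate.

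Where you genuinely depart from the paper is the passage $n \to \infty$. You extract a weak limit of $\partial\vp(X^{(n)})$ on each $[\d,T]$ and identify it via demiclosedness of the maximal monotone realization of $\partial\vp_\l$ on $L^2([\d,T]\times\O;H)$, using the strong convergence $X^{(n)} \to X$; this is correct (that realization is the subdifferential of $Z \mapsto \E\int_\d^T \vp_\l(Z_t)\,dt$, hence maximal monotone, and your caveats about working away from $t=0$ and about the $\l$-shift are the right ones), though it requires a diagonal extraction over $\d$ and a second run of the Minty-type machinery. The paper avoids this entirely: from $\vp_\l(X) \in L^1([0,T]\times\O)$ it picks, for any $\d > 0$, some $\d_0 \in [0,\d]$ with $\E\vp_1(X_{\d_0}) < \infty$, restarts the equation at $\d_0$ to obtain a genuine strong solution via Theorem \ref{thm:main}, identifies it with $X$ on $[\d_0,T]$ by uniqueness of limit solutions (Theorem \ref{thm:limit_soln}), transfers the time-weighted bound of Lemma \ref{lemma:12_strong} to $X$, and sends $\d_0 \to 0$ with Fatou. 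The restart argument buys a shorter proof that reuses the identification already performed inside Theorem \ref{thm:main}; your route is more self-contained and does not rely on the restarted solution being a limit solution, at the cost of repeating the weak-convergence analysis along the approximating sequence.
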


If the derivative $A= -D\vp : V \to V^*$ satisfies additional coercivity and growth conditions (cf.\ (K), (H1)-(H4) in \cite{RRW07}) the variational approach to SPDE can be used to prove the unique existence of variational solutions to \eqref{eqn:SPDE}. In this case limit solutions and variational solutions coincide and consequently Theorem \ref{thm:regularity} provides a regularity result for variational solutions.
\begin{remark}\label{rmk:limit=varn}
  Assume (K), (H1)-(H4) from \cite{RRW07} and let $X^{(l)}$, $X^{(v)}$ be limit (resp. variational) solutions to \eqref{eqn:SPDE}. Then $X^{(l)}=X^{(v)}$ up to indistinguishability.
\end{remark}

\section{Proof of Theorem \ref{thm:main}}\label{sec:proof_main_thm}

Note that (A1) implies that $\vp$ is bounded from below. Therefore, without loss of generality we can assume $\vp \ge 0$ in all proofs. In the following $C$ denotes a constant which may change from line to line. We first consider the case of additive, time-dependent noise.

\subsection{Additive noise}

\begin{lemma}[Additive noise]\label{lemma:main_add}
  Assume (A1)-(A6), $X_0 \in L^2(\O,\mcF_0;H)$ with $\E \vp_1(X_0) < \infty$ and that $B_t(v) \equiv B_t$ is independent of $v \in V$. Then there is a unique strong solution $X$ to 
  \begin{equation}\label{eqn:add_SPDE}
      dX_t = - \partial\vp(X_t) dt + B_t dW_t,\text{ on } [0,T]
  \end{equation}
  satisfying
  \begin{align}\label{eqn:add_energy_bound}
    &\sup_{t \in [0,T]} \E (\vp(X_t)+\|X_t\|_H^2) + \E \int_0^T \|\partial\vp(X_r)\|_H^2 dr \\
    &\hskip100pt \le  C \left( \E \vp_1(X_0) + 1 + \E \int_0^t \|B_r\|_{L_{2,\td\vp_1},(p_i)} dr \right), \nonumber
  \end{align}
  for some $C > 0$. Let $X^{(i)}_0 \in L^2(\O,\mcF_0;H)$ with $\E \vp_1(X^{(i)}_0) < \infty$, $B^{(i)}: [0,T] \times \O \to L_2(U,H)$ be two progressively measurable processes satisfying (A5), (A6) and $X^{(i)}$ be the corresponding strong solutions to \eqref{eqn:add_SPDE}. Then
  \begin{equation}\label{eqn:dep_ic_noise_add}
    \E \sup_{t \in [0,T]} \|X^{(1)}_t-X^{(2)}_t\|_H^2 \le C \E \left( \|X^{(1)}_0-X^{(2)}_0\|_H^2 + \int_0^T \|B_r^{(1)}-B_r^{(2)}\|_{L_2(U,H)}^2 dr \right). 
  \end{equation}
\end{lemma}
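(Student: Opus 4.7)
The plan is to construct the strong solution as the limit of a weighted Galerkin scheme that respects the geometry of $\vp$, derive uniform a-priori estimates via It\^o's formula applied to $\vp_1(X^n_\cdot)$ on the finite dimensional subspaces $H_n$ (where $\vp$ is $C^2$ by (A2)), and then identify the limit by a monotonicity trick. WLOG $\vp \ge 0$ (by (A1)), and by passing to $\vp_\l$ for the $\l$ of (A3) we may reduce every monotonicity step to the convex case. For each $n$ set
\[
  X_0^n := \mcP_{\vp_1, n} X_0 \in H_n, \qquad B_t^n \td e_k := \mcP_{\td\vp_1, n}(B_t \td e_k),
\]
for the basis $\td e_k$ of $U$ given by (A6). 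The defining properties of the weighted best approximation recalled in the introduction yield $\vp_1(X_0^n) \le C\,\vp_1(X_0)$ and $\|B_t^n\|_{L_{2,\td\vp_1},(p_i)} \le C\,\|B_t\|_{L_{2,\td\vp_1},(p_i)}$, together with $X_0^n \to X_0$ in $H$ and $B^n \to B$ in $L^2([0,T]\times\O; L_2(U,H))$. With $P_n$ the orthogonal projection $H \to H_n$, standard finite-dimensional SDE theory supplies a unique strong $H_n$-valued solution $X^n$ to
\[
  dX_t^n = -P_n D\vp(X_t^n)\,dt + B_t^n\,dW_t, \qquad X^n(0)=X_0^n.
\]

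\textbf{A-priori estimates.} Apply It\^o to $\vp_1(X_t^n)$ on $H_n$, which is legitimate by (A2). The drift contributes $-\|P_n D\vp(X_t^n)\|_H^2\,dt$ plus a harmless $\l$-correction, while the second-derivative term is controlled via (A2) and (A6) by
\[
  C\bigl(1 + \vp(X_t^n) + \|B_t^n\|_{L_{2,\td\vp_1},(p_i)}\bigr) \le C\bigl(f_t + \vp_1(X_t^n) + \|B_t\|_{L_{2,\td\vp_1},(p_i)}\bigr).
\]
Taking expectation and applying Gronwall gives the Galerkin analogue of \eqref{eqn:add_energy_bound}, uniformly in $n$. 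A separate It\^o expansion of $\|X_t^n\|_H^2$, combined with the weak coercivity (A4) and Burkholder--Davis--Gundy, yields $\E \sup_{t \in [0,T]} \|X_t^n\|_H^2 \le C$.

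\textbf{Passing to the limit.} From the bounds, along a subsequence $X^n \rightharpoonup X$ and $P_n D\vp(X^n) \rightharpoonup Y$ weakly in $L^2([0,T]\times\O; H)$, and $\int_0^\cdot B_r^n\,dW_r \to \int_0^\cdot B_r\,dW_r$ in $L^2(\O; C([0,T]; H))$ since $B^n \to B$ in $L^2$. Taking limits in the integrated Galerkin equation produces
\[
  X_t = X_0 - \int_0^t Y_r\,dr + \int_0^t B_r\,dW_r,
\]
and It\^o's formula yields $X \in L^2(\O; C([0,T]; H))$. It remains to show $Y = \partial\vp(X)$. This is effected by the standard monotonicity argument: expand $\|X_t^n - P_n v_t\|_H^2$ via the Galerkin equation and $\|X_t - v_t\|_H^2$ via the limit equation for a smooth test process $v$ valued in $V_0$, take $\liminf$ using weak lower semicontinuity, and exploit monotonicity of $D\vp_\l$ to obtain $\E \int_0^T \langle Y_r - D\vp_\l(v_r), X_r - v_r\rangle\,dr \ge 0$. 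Varying $v$ densely in $V$ then forces $Y_t = D\vp(X_t)$ a.e., so in particular $X_t \in \mcD(\partial\vp)$ a.e. The a-priori bound \eqref{eqn:add_energy_bound} transfers to $X$ by weak lower semicontinuity of $\vp$.

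\textbf{Uniqueness, continuous dependence, and the main obstacle.} Given two strong solutions $X^{(1)}, X^{(2)}$ with data $(X_0^{(i)}, B^{(i)})$, It\^o applied to $\|X^{(1)}_t - X^{(2)}_t\|_H^2$ produces a drift dominated by $2\l\|X^{(1)}_t - X^{(2)}_t\|_H^2\,dt$ (by quasi-monotonicity of $\partial\vp$) plus $\|B^{(1)}_t - B^{(2)}_t\|_{L_2(U,H)}^2\,dt$ and a local martingale; BDG and Gronwall yield \eqref{eqn:dep_ic_noise_add}, with uniqueness as the special case of equal data. The principal obstacle throughout is producing the uniform $L^2$-bound on $P_n D\vp(X^n)$ in $H$ (rather than merely in $V^*$): this is what promotes the solution from variational to analytically strong, and it rests on controlling $\sum_k D^2\vp(X^n)(B^n\td e_k, B^n\td e_k)$ in It\^o's formula uniformly in $n$. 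The weighted projection $\mcP_{\td\vp_1, n}$ for the noise is essential here; without the estimate $\|B^n\|_{L_{2,\td\vp_1},(p_i)} \le C\|B\|_{L_{2,\td\vp_1},(p_i)}$ the second-derivative contribution from (A2) would not close under Gronwall.
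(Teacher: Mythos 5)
Your overall architecture — the weighted Galerkin scheme with $\mcP_{\vp,n}$ for the data and $P_n$ for the drift, the a-priori bound from It\^o's formula applied to $\vp$ on $H_n$ using (A2) together with $\td\vp_1(\mcP_{\vp,n}v)\le C\td\vp_1(v)$, the weak limit passage, and the uniqueness/continuous-dependence step via It\^o and Burkholder — matches the paper's proof. The genuine divergence, and the gap, is in the identification of the weak limit $Y$ of $P_nD\vp(X^n)$. You invoke ``the standard monotonicity argument \dots varying $v$ densely in $V$ then forces $Y_t=D\vp(X_t)$'', but this Minty--Browder step does not go through here, for two reasons. First, to pass to the limit in the monotonicity inequality you must control the term $\Vbk{D\vp_\l(v_r),X^n_r-v_r}$; since $X^n\rightharpoonup X$ only in $L^2([0,T]\times\O;H)$ (there is no $V$-coercivity in this framework, (A4) is only an $H$-bound, so no weak convergence in any $V$-based space is available) and $D\vp_\l(v)\in V^*$ need not lie in $H$ even for $v$ valued in $V_0$, this pairing does not converge. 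Second, even granting the limit inequality $\E\int_0^T (Y_r+\l X_r-D\vp_\l(v_r),X_r-v_r)_H\,dr\ge 0$ for a dense class of test processes, ``varying $v$ densely in $V$'' does not yield $Y+\l X=\partial\vp_\l(X)$: the classical conclusion needs either hemicontinuity along $v=X-\epsilon w$ — which requires $X-\epsilon w$ to be an admissible test process and $\Vbk{D\vp_\l(X-\epsilon w),w}$ to be integrable, neither of which is available — or maximal monotonicity of the realization of $\partial\vp_\l$ on $L^2([0,T]\times\O;H)$, which is not free.

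The paper's identification is designed precisely to avoid both obstacles. It uses the subgradient inequality at $X^n$, namely $\vp_\l(X^n_r)\le\vp_\l(Z_r)+\Vbk{D\vp_\l(X^n_r),X^n_r-Z_r}$, so that only $D\vp_\l(X^n)$ ever appears: paired with $X^n$ it is handled by It\^o's formula for $\|\cdot\|_H^2$, and paired with the test process it is tamed by choosing $Z^m=\mcP_{\vp,m}Z$, so that for $n\ge m$ the pairing is the $H$-inner product $(P_nD\vp_\l(X^n),Z^m)_H$, which does converge weakly; the weighted projection then gives $Z^m\to Z$ in the $\vp$-sense. This shows $e^{-2\l\cdot}(Y+\l X)\in\partial\Phi_{\l,T}(X)$ for the convex integral functional $\Phi_{\l,T}$, and the characterization $\partial\Phi_{\l,T}(X)=\{e^{-2\l\cdot}\zeta:\ \zeta_t(\o)\in\partial\vp_\l(X_t(\o))\ \text{dt}\otimes\P\text{-a.s.}\}$ (the analogue of Brezis' result) supplies exactly the maximal-monotonicity input your sketch is missing. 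You should replace your Minty step by an argument of this type; the rest of your proposal stands.
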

In order to prove the existence of solutions to \eqref{eqn:add_SPDE} we consider finite dimensional approximations. Let $H_n := \text{span}\{e_1,\dots,e_n\} \subseteq V \subseteq S,H$ and let $P_n: V^* \to H_n$ be defined by $P_n h := \sum_{k=1}^n e_k \Vbk{h,e_k}$. For $\l >0$, $\td\vp_\l$ is a continuous, subhomogeneous function with bounded level-sets and $\td\vp_\l(v_n) \to 0$ implies $v_n \to 0$ in $V$. Moreover, $\td\vp_1$ is strictly convex and 
\begin{equation}
  C_1\td\vp(v)-C_2 \le \vp(v) \le C_2(\td\vp_1(v)+1),
\end{equation}
for all $v \in V$. Since $H_n$ is convex and locally compact, the $\td\vp_1$-best approximation map $\mcP_{\vp,n}: V \to H_n$ is well-defined by
  \[ \td\vp_1(v-\mcP_{\vp,n}v) = \inf_{h \in H_n} \td\vp_1(v-h) \]
and is continuous (cf. \cite[Theorem 5, 6, 8]{GS82}). Since $\td\vp_1$ is subhomogeneous and $0 \in H_n$, there is a constant $C > 0$ (independent of $n$) such that
\begin{equation}\label{eqn:approx_bound}
  \td\vp_1(\mcP_{\vp,n}v) \le C \td\vp_1(v).
\end{equation}
Denote $X_0^n := \mcP_{\vp,n} X_0$, let $\{\tilde{e_k}|\ k\in \N\}$ be an orthonormal basis of $U$ as in (A6) and consider the approximating equation 
  \begin{align}\label{eqn:approx_eqn}
     X_t^n &= \mcP_{\vp,n} X_0 - \int_0^t P_{n} D\vp(X_r^n) dr + \sum_{k=1}^n \int_0^t \mcP_{\vp,n} B_r(\td e_k) d\b^k_r.
  \end{align}
By a standard existence result for finite dimensional monotone SDE there is a unique continuous strong solution to \eqref{eqn:approx_eqn} (cf. \cite[Theorem 3.1.1.]{PR07}). We now derive a-priori estimates for \eqref{eqn:approx_eqn}. 
\begin{lemma}\label{lem:a-priori-bdd}
  Assume (A1)-(A6), $X_0 \in L^2(\O,\mcF_0;H)$ satisfying $\E \vp_1(X_0) < \infty$ and that $B_t(v) \equiv B_t$ is independent of $v \in V$. Then there exists a constant $C \in \R$ such that     
  \begin{align}\label{eqn:a-priori-bdd}
    &\sup_{t \in [0,T]} \E \left( \vp(X_t^n)+\|X_t^n\|_H^2 \right) + \E \int_0^T \|P_nD \vp(X_r^n)\|_H^2 dr \\
    &\hskip100pt \le C \left( \E \vp_1(X_0) + 1 + \E \int_0^t \|B_r\|_{L_{2,\td\vp_1},(p_i)} dr \right) \nonumber.
  \end{align}
\end{lemma}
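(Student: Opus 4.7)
The plan is to apply the classical finite-dimensional It\^o formula to $\vp_1(X^n_t) = \vp(X^n_t)+\tfrac12\|X^n_t\|_H^2$ along the SDE \eqref{eqn:approx_eqn}, which is justified since (A2) gives $\vp \in C^2(H_n)$ and the process lives in $H_n$. The $H_n$-gradient of $\vp_1$ at $X^n_r$ equals $P_n D\vp(X^n_r)+X^n_r$, so pairing it with the drift $-P_n D\vp(X^n_r)$ produces $-\|P_n D\vp(X^n_r)\|_H^2 - \langle X^n_r, P_n D\vp(X^n_r)\rangle_H$. Since $X^n_r \in H_n$, the second term equals $\Vbk{D\vp(X^n_r), X^n_r}$, which by the weak coercivity hypothesis (A4) satisfies $-\Vbk{D\vp(X^n_r), X^n_r} \le \tfrac{C}{2}(1+\|X^n_r\|_H^2)$. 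This is the payoff of working with $\vp_1$ rather than $\vp$ alone: the coercivity condition closes without extra assumptions.

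Next I would treat the It\^o correction $\tfrac12 \sum_{k=1}^n D^2\vp_1(X^n_r)(w^n_k,w^n_k)$ with $w^n_k := \mcP_{\vp,n} B_r \td e_k \in H_n$. Splitting $D^2\vp_1 = D^2\vp + I$, the identity part contributes $\tfrac12\|\mcP_{\vp,n}B_r\|_{L_2(U,H)}^2$, which by (A1) and subhomogeneity of $\td\vp_1$ is bounded by a multiple of $\|B_r\|_{L_{2,\td\vp_1},(p_i)}$. The $D^2\vp$ part is the delicate one: by hypothesis (A2) it is dominated by
\[
C\Bigl(1 + \vp(X^n_r) + \sum_{i=0}^N \Bigl(\sum_{k=1}^n \td\vp_1(w^n_k)^{1/p_i}\Bigr)^{p_i}\Bigr).
\]
Now the subhomogeneity of $\td\vp_1$ together with the best-approximation estimate \eqref{eqn:approx_bound} gives $\td\vp_1(\mcP_{\vp,n} B_r \td e_k) \le C \td\vp_1(B_r \td e_k)$ uniformly in $n$, so this whole contribution is absorbed into $C(1+\vp(X^n_r)+\|B_r\|_{L_{2,\td\vp_1},(p_i)})$, which is linear in $\vp(X^n_r)$ and hence Gronwall-friendly. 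The initial condition is handled similarly: (A1) plus \eqref{eqn:approx_bound} yields $\vp_1(\mcP_{\vp,n} X_0) \le C\vp_1(X_0)$ uniformly in $n$.

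Having collected these bounds, I would take expectations in the resulting It\^o identity. To be rigorous about the vanishing of the stochastic integral I would first localize via $\tau_M := \inf\{t:\|X^n_t\|_H+\vp(X^n_t) \ge M\}\wedge T$, so that the stochastic integral on $[0,t\wedge\tau_M]$ is a genuine martingale with zero expectation. This produces
\[
\E\vp_1(X^n_{t\wedge\tau_M}) + \E\!\int_0^{t\wedge\tau_M}\!\!\|P_n D\vp(X^n_r)\|_H^2\,dr \le C\E\vp_1(X_0) + C\E\!\int_0^t\!\bigl(1+\vp(X^n_r)+\|X^n_r\|_H^2+\|B_r\|_{L_{2,\td\vp_1},(p_i)}\bigr)\,dr.
\]
Applying Gronwall to $\E\vp_1(X^n_{t\wedge\tau_M})$, then sending $M\to\infty$ via monotone convergence and Fatou on the dissipation integral, gives \eqref{eqn:a-priori-bdd} (recall $\vp\ge 0$).

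The main obstacle will be ensuring that the It\^o correction estimate is truly uniform in $n$ and linear in $\vp(X^n_r)$, which is exactly what (A2) is engineered for: neither the constant $C$ nor the exponents $p_i$ may depend on $n$. Beyond that, the crucial structural ingredient is that $\mcP_{\vp,n}$ respects $\td\vp_1$ through subhomogeneity, so that both the initial condition and the noise can be projected to $H_n$ without losing control. Once these two points are in hand, the remainder is a standard Gronwall-plus-localization argument.
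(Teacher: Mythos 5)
Your proposal is correct and follows essentially the same route as the paper: It\^o's formula on the finite-dimensional approximation, the bound on the second-order correction via (A2) combined with the subhomogeneity estimate \eqref{eqn:approx_bound} for $\mcP_{\vp,n}$, and a localization-plus-Gronwall closing step. The only (cosmetic) difference is that you apply It\^o once to $\vp_1=\vp+\tfrac12\|\cdot\|_H^2$ and absorb the cross term via (A4), whereas the paper treats $\vp(X^n_t)$ and $\|X^n_t\|_H^2$ in two separate It\^o applications.
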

\begin{proof}
  Since $\vp \in C^2(H_n)$, we can apply It\^o's formula for the finite dimensional approximation. We get
  \begin{align*}
    \vp(X_t^n) 
        &= \vp(X_0^n) - \sum_{i=1}^n \int_0^t (P_n D \vp(X_r^n),e_i)_H^2 dr + \sum_{i=1}^n \sum_{k=1}^n \int_0^t D \vp(X_r^n)(e_i) (\mcP_{\vp,n}B_r(\td e_k),e_i)_H d\b^k_r \\
        &\hskip20pt + \frac{1}{2} \sum_{k=1}^n \int_0^t  D^2\vp(X_r^n)(\mcP_{\vp,n}B_r(\tilde{e_k}),\mcP_{\vp,n}B_r(\tilde{e_k})) dr,
  \end{align*}
  Using (A2) and \eqref{eqn:approx_bound} we observe:
  \begin{align*}
    &\frac{1}{2} \sum_{k=1}^n \int_0^t  D^2\vp(X_r^n)(\mcP_{\vp,n}B_r(\tilde{e_k}),\mcP_{\vp,n}B_r(\tilde{e_k})) dr \\
    &\hskip15pt \le \frac{1}{2} \int_0^t  C \left(1 + \vp(X_r^n) + \sum_{i=0}^N \left( \sum_{k=1}^\infty \td\vp_1(\mcP_{\vp,n}B_r(\tilde{e_k}))^\frac{1}{p_i} \right)^{p_i} \right) dr \\
    &\hskip15pt = C \int_0^t \left(1 + \vp(X_r^n) + \|B_r\|_{L_{2,\td\vp_1},(p_i)} \right) dr.
  \end{align*}

  By using a standard localization argument, (A6) and \eqref{eqn:approx_bound} we thus obtain
  \begin{align*}
    \E e^{-Ct} \vp(X_t^n) + \E \int_0^t e^{-Cr}\|P_n D \vp(X_r^n)\|_H^2 dr 
    &\le \E \vp(X_0^n) + C \E \int_0^t e^{-Cr} \left(1 + \|B_r\|_{L_{2,\td\vp_1},(p_i)} \right) dr \\
    &\le C \left( \E \td\vp_1(X_0) + 1 + \E \int_0^t \|B_r\|_{L_{2,\td\vp_1},(p_i)} dr \right).
  \end{align*}
  The remaining bound for $\|X_t^n\|_H^2$ follows by applying It\^o's formula to $\|\cdot\|_H^2$.
\end{proof}

\begin{proof}[Proof of Lemma \ref{lemma:main_add}.]\ 

By Lemma \ref{lem:a-priori-bdd} we know that $X^n$ and $P_n D\vp(X^n)$ are uniformly bounded in $L^2([0,T] \times \O;H)$.
By reflexivity of $L^2([0,T] \times \O;H)$ we can thus find a subsequence (again denoted by $n$), such that
\begin{packed_enum}
 \item $X^n \rightharpoonup \bar{X}$ in $L^2([0,T] \times \O;H )$.
 \item $P_n D\vp(X_r^n) \rightharpoonup \eta$ in $L^2([0,T] \times \O;H )$ and hence
       \[ \int_0^\cdot P_n D\vp(X_r^n) dr \rightharpoonup \int_0^\cdot \eta_r dr, \text{ in } L^2([0,T]\times\O;H).\]
\end{packed_enum}
Since $V_0 \subseteq V$ is dense, $\td\vp_1: V \to \R$ is continuous, subhomogeneous and $\td\vp_1(v_n) \to 0$ implies $v_n \to 0$ in $V$, we have 
\begin{remark}\label{rmk:vp1_dense}
  $\mcP_{\vp,n}(v) \xrightarrow{n \to \infty}  v$, for all $v \in V$.
\end{remark}
Using \eqref{eqn:approx_bound} and dominated convergence implies $\mcP_{\vp,n}B_t \to B_t$ in $L^2([0,T]\times\O;L_2(U,H))$ and $X_0^n = \mcP_{\vp,n}X_0 \to X_0$ in $L^2(\O;H)$. Hence
  \[ \sum_{k=1}^n \int_0^\cdot \mcP_{\vp,n}B_r(\td e_k) d\b^k_r  \to \int_0^\cdot B_r dW_r,\text{ in } L^\infty([0,T];L^2(\O;H)).\]

Let $\Phi \in L^2([0,T] \times \O;H)$. Then
\begin{align*}
  \E \int_0^T (\bar{X}_s,\Phi_s)_H ds
   &= \lim_{n \to \infty} \E \int_0^T (X^n_s,\Phi_s)_H ds \\
   &= \lim_{n \to \infty} \E \int_0^T \Big(X_0^n - \int_0^s P_n D \vp(X_r^n) dr + \sum_{k=1}^n \int_0^s \mcP_{\vp,n}B_r(\td e_k) d\b^k_r,\Phi_s \Big)_H ds \\
   &= \E \int_0^T \Big(X_0 - \int_0^s \eta_r dr + \int_0^s B_r dW_r,\Phi_s\Big)_H ds.
\end{align*}
Let now
\begin{equation}\label{eqn:X-def}
  X_t := X_0 - \int_0^t \eta_r dr + \int_0^t B_r dW_r.
\end{equation}
Then $\bar{X} = X$ dt$\otimes\P$-a.s.\ and $X_t$ is an $\mcF_t$-adapted, continuous process in $H$. 

It remains to identify the limit $\eta$, i.e.\ we need to prove $\eta = \partial\vp(X)$ or equivalently $\eta - \l X = \partial\vp_\l(X)$, almost surely. Let $t \in (0,T]$ and $\l > 0$ such that $\vp_\l$ is convex. We define a mapping $\Phi_{\l,t}: L^2([0,t]\times\O;H) \to \bar{\R}$ by
$$ \Phi_{\l,t}(Z) := 
\begin{cases}
  \E \int_0^t e^{- 2\l r }\vp_\l(Z_r)dr        & \text{, if } \vp_\l(Z) \in L^1([0,t]\times\O) \\
            + \infty                            & \text{, otherwise.}
\end{cases} $$
Since $\vp_\l$ is convex, l.s.c.\ and non-negative, $\Phi_{\l,t}$ is a convex, l.s.c.\ function. We will show $e^{-2\l \cdot}(\eta_\cdot - \l X_\cdot) \in \partial\Phi_{\l,T}(X)$. We know: 
  \[ \vp_\l(X^n_r) \le \vp_\l(v) + \Vbk{D \vp_\l(X^n_r),X^n_r-v} = \vp_\l(v) + \Vbk{P_n D \vp_\l(X^n_r),X^n_r} - \Vbk{D \vp_\l(X^n_r),v}, \]
for all $v \in V$, $r \in [0,T]$, $\o \in \O$. For $Z \in \mcD(\Phi_{\l,t})$ this implies
\begin{align}\label{eqn:subgr_1}
  \Phi_{\l,t}(X^n) 
  &= \Phi_{\l,t}(Z) + \E\int_0^t e^{-2\l r} (P_n D \vp_\l(X^n_r),X^n_r)_H dr \\
     &\hskip15pt -  \E\int_0^t e^{-2\l r} \Vbk{ D \vp_\l(X^n_r),Z_r } dr. \nonumber 
\end{align}
By It\^o's formula applied to $\|\cdot\|_H^2$ we get
\begin{align}\label{eqn:Ito_approx}
  &\E e^{-2\l t} \|X_t^n\|_H^2 \\
  &= \E \|X_0^n\|_H^2 + \E \int_0^t e^{-2 \l r} \left( 2(-P_n D \vp_\l(X_r^n),X_r^n)_H + \|\mcP_{\vp,n} B_r\|_{L_2(U,H)}^2 \right) dr.\nonumber 
\end{align}
Using this in \eqref{eqn:subgr_1}, multiplying with a non-negative function $\Psi \in L^\infty([0,T])$ and integrating yields
\begin{align}\label{eqn:subgr_2}
   &\int_0^T \Psi_t \Phi_{\l,t}(X^n) dt
   \le \int_0^T \Psi_t \Phi_{\l,t}(Z) dt \nonumber\\
     &\hskip75pt + \frac{1}{2} \int_0^T \Psi_t \bigg(  \E \|X_0^n\|_H^2 + \E \int_0^t e^{-2\l r} \|\mcP_{\vp,n} B_r \|_{L_2(U,H)}^2 dr - \E e^{-2\l t}\|X_t^n\|_H^2 \bigg) dt  \\
     &\hskip75pt - \int_0^T \Psi_t \E\int_0^t e^{-2\l r} \Vbk{D\vp_\l (X^n_r),Z_r} dr dt.\nonumber
\end{align}
We have seen above that $\E\|X_0^n\|_H^2 \to \E\|X_0\|_H^2$ and $\mcP_{\vp,n} B_r \to B_r$ in $L^2([0,T]\times\O;L_2(U,H))$. Since $X_t^n \rightharpoonup X$ in $L^2([0,T]\times\O;H)$ and $\Phi_{\l,t}$ is weakly l.s.c., by \eqref{eqn:subgr_2} and dominated convergence we obtain
\begin{align}\label{eqn:subgr_3}
   \int_0^T \Psi_t \Phi_{\l,t}(X) dt
   &\le \int_0^T \Psi_t \Phi_{\l,t}(Z) dt \nonumber\\
    &\hskip15pt + \frac{1}{2} \int_0^T \Psi_t \bigg(  \E \|X_0\|_H^2 + \E \int_0^t e^{-2\l r}\|B_r\|_{L_2(U,H)}^2  dr - \E e^{-2\l t}\|X_t\|_H^2 \bigg) dt \\
    &\hskip15pt - \limsup_{n \to \infty}  \int_0^T \Psi_t \E\int_0^t e^{-2\l r} \Vbk{ D \vp_\l(X^n_r),Z_r} dr dt. \nonumber
\end{align}
 
Application of It\^o's formula to \eqref{eqn:X-def} and the function $\|\cdot\|_H^2$ yields
\begin{align}\label{eqn:Ito_for_limit}
  &\E e^{-2\l t}\|X_t\|_H^2 = \E \|X_0\|_H^2  + \E \int_0^t e^{- 2\l r} \left( 2 \big( -(\eta_r+\l X_r),X_r \big)_H + \|B_r\|_{L_2(U,H)}^2 \right) dr.
\end{align}
Equation \eqref{eqn:subgr_3} becomes
\begin{align}\label{eqn:subgr_6}
   \int_0^T \Psi_t \Phi_{\l,t}(X) dt
   &\le \int_0^T \Psi_t \Phi_{\l,t}(Z) dt + \int_0^T \Psi_t \E \int_0^t e^{-2 \l r}  (\eta_r+\l X_r,X_r)_H dr dt\\
    &\hskip15pt - \limsup_{n \to \infty} \int_0^T \Psi_t \E\int_0^t e^{-2 \l r} \Vbk{ D \vp_\l(X^n_r),Z_r} dr dt  .\nonumber
\end{align}
Define $Z^m := \mcP_{\vp,m} Z$. By (A1), $\vp_\l(Z^m) \le C(1+\td\vp_1(Z)) \le C(1+\vp_\l(Z)) \in L^1([0,T] \times \O).$ Hence, $Z^m \in \mcD(\Phi_{\l,T})$ and \eqref{eqn:subgr_6} can be applied. Recall $P_nD\vp(X^n) \rightharpoonup \eta$ in $L^2([0,T]\times\O;H)$ and hence also in $L^2([0,t]\times\O;H)$. \eqref{eqn:subgr_6} applied to $Z = Z^m$ yields
\begin{align*}
   &\int_0^T \Psi_t \Phi_{\l,t}(X) dt
   \le \int_0^T \Psi_t \Phi_{\l,t}(Z^m) dt + \int_0^T \Psi_t \E \int_0^t  e^{-2\l r} (\eta_r + \l X_r,X_r-Z^m_r)_H dr\ dt.
\end{align*}
Since $\Psi \in L^\infty([0,T])$ was arbitrary nonnegative, consequently,
\begin{align}\label{eqn:subgr_4}
    \Phi_{\l,T}(X)\le \Phi_{\l,T}(Z^m) + \E \int_0^T  e^{-2\l r} (\eta_r +\l X_r,X_r-Z^m_r)_H dr.
\end{align}
We now want to consider the limit $m \to \infty$. By Remark \ref{rmk:vp1_dense} and dominated convergence we have $\vp(Z^m) \to \vp(Z)$ in $L^1([0,T]\times\O)$ and $Z^m \to Z$ in $L^2([0,T]\times\O;H)$. Thus, also $\Phi_{\l,T}(Z^m) \to \Phi_{\l,T}(Z)$. Passing to the limit in \eqref{eqn:subgr_4} we arrive at 
\begin{align}\label{eqn:subgr_5}
    \Phi_{\l,T}(X) \le \Phi_{\l,T}(Z) + \E \int_0^T  e^{-2\l r}(\eta_r+\l X_r,X_r-Z_r)_H dr,
\end{align}
i.e.\ $e^{-2\l \cdot} (\eta_\cdot+\l X_\cdot) \in \partial\Phi_{\l,T}(X)$. The proof of
  \[ \partial\Phi_{\l,T}(X) = \{ e^{-2\l \cdot} Y | Y \in L^2([0,T]\times \O;H) \text{ and } Y_t(\o) \in \partial \vp_\l(X_t(\o)),\ \text{dt}\otimes\P\text{-a.s.} \}\]
is similar to \cite[Proposition 2.9., p.75]{B93}. Therefore, $\eta(t,\o)+\l X_t(\o) \in \partial \vp_\l(X_t(\o))$ and since $\partial \vp(X_t(\o))$ is single-valued we conclude  $\eta(t,\o) = \partial \vp(X_t(\o))$, $\text{dt}\otimes\P$-almost surely. 
Thus
  \[X_t = X_0 - \int_0^t \partial\vp(X_r) dr + \int_0^t B_r dW_r,\ \forall t \in [0,T], \]
$\P$-almost surely.

Next we will prove \eqref{eqn:add_energy_bound}. Since $P_n D\vp(X_r^n) \rightharpoonup \partial\vp(X)$ in $L^2([0,T]\times\O;H)$:
\begin{align}\label{eqn:add_enery_bound_1}
  \E\int_0^T \|\partial\vp(X_r)\|_H^2 dr 
  &\le \liminf_{n \to \infty} \E\int_0^T \|P_nD\vp(X^n_r)\|_H^2 dr \\
  &\le C \left(\E \td\vp_1(X_0) + 1 + \E \int_0^T \|B_r\|_{L_{2,\td\vp_1},(p_i)} dr \right), \nonumber
\end{align}
where $C \in \R$ is the constant occurring in \eqref{eqn:a-priori-bdd}. For $Z \in L^2([0,T]\times\O;H)$ and $p > 1$ let $\Phi_{\l,p}(Z) := \int_0^T |\E \vp_\l(Z_t) |^p dt.$ $\Phi_{\l,p}$ is a convex, l.s.c.\ function, as $\vp_\l$ is. By Lemma \ref{lem:a-priori-bdd} we obtain
\begin{align*}
  \|\E\vp_\l(X_\cdot)\|_{L^p([0,T])} 
  &= \Phi_{\l,p}(X)^\frac{1}{p} \le \liminf_{n \to \infty} \Phi_{\l,p}(X^n)^\frac{1}{p} \le T^\frac{1}{p} \sup_{t \in [0,T]} \E \vp_\l(X^n_t)\\
  &\le C \left(\E \vp_1(X_0) + 1 + \E \int_0^T \|B_r\|_{L_{2,\td\vp_1},(p_i)} dr \right),
\end{align*}
for some $C \in \R$. Since the right hand side is independent of $p$, $X$ is continuous in $H$ and $\E\vp_\l(\cdot): L^2(\O;H) \to \bar\R$ is l.s.c., this implies
  \[ \sup_{t \in [0,T]} (\E\vp(X_t) + \|X_t\|_H^2) \le C \left(\E \vp_1(X_0) + 1 + \E \int_0^T \|B_r\|_{L_{2,\td\vp_1},(p_i)} dr \right),\]
for some $C \in \R$. Together with \eqref{eqn:add_enery_bound_1} this yields \eqref{eqn:add_energy_bound}.

Next, we prove that $X \in L^2(\O;C([0,T];H))$. Using It\^o's formula for $X_t^n$ and $\|\cdot\|_H^2$, applying Burkholder's inequality to the real-valued local martingale $\sum_{k=1}^n \int_0^t 2(X_r^n,\mcP_{\vp,n} B_r(\td e_k))_H d\b^k_r$ and using Lemma \ref{lem:a-priori-bdd}, we conclude
\begin{align*}
  \frac{1}{2} \E \sup_{t \in [0,T]}\|X_t^n\|_H^2 
  &\le \E\|X_0^n\|_H^2 + \E \int_0^T C(f_r+\|X_r^n\|^2_H) dr \le C (\E \td\vp_1(X_0) + 1).
\end{align*}
Since $X^n \rightharpoonup X$ in $L^2([0,T]\times\O;H)$, this implies $X^n \rightharpoonup X$ in $L^2(\O;L^p([0,T];H))$ with $\|X\|_{L^2(\O;L^p([0,T];H))}\le C (\E \td\vp_1(X_0) + 1)$, for every $p > 2$. By continuity of $X$ in $H$ this yields
  \[ \|X\|_{L^2(\O;C([0,T];H))}\le C (\E \td\vp_1(X_0) + 1).\]

Inequality \eqref{eqn:dep_ic_noise_add} is shown by first applying It\^o's formula to $X_t^{(1)} - X_t^{(2)}$ and $\|\cdot\|_H^2$, then using Burkholder's inequality for the real-valued local martingale $\int_0^t (X_r^{(1)}-X_r^{(2)},(B_r^{(1)}-B_r^{(2)})dW_r)_H$.
\end{proof}

\subsection{General noise}\label{subsec:general_noise}

By using a fixed point argument we will now generalize the above result to the case of not necessarily additive noise. 

Let $X_0 \in L^2(\O,\mcF_0;H)$ with $\E \td\vp_1(X_0) < \infty$, $\mcH := \{X \in L^\infty([0,T];L^2(\O;H))|\ X \text{ predictable}\}$, $\l > 0$ such that $\vp_\l$ is convex, $\mcG := \{X \in \mcH| \vp_\l(X) \in L^1([0,T] \times \O)\}$ and  $\mcG_\tau := \{X \in \mcH| \Phi_{\l,T}(X) := \E\int_0^T e^{-Cr} \vp_\l(X_r)dr \le \tau \}$, where $\tau \in \R$. Moreover, let $B: [0,T] \times \O \times V \to L_2(U,H)$ as in Theorem \ref{thm:main}. $\Phi_{\l,T}: L^2([0,T]\times\O;H) \to \bar\R$ is l.s.c., since $\vp_\l$ is l.s.c. and nonnegative. Thus, also $\Phi_{\l,T}: L^\infty([0,T];L^2(\O;H)) \to \bar\R$ is l.s.c. and thereby $\mcG_\tau$ is a closed subset of $\mcH$. For $Y \in \mcG$ we have $\|B_t(Y_t)\|_{L_{2,\td\vp_1},(p_i)} \le C(f_t+\vp(Y_t)+\|Y_t\|_H^2) \in L^1([0,T]\times\O)$.
Hence, we can apply Lemma \ref{lemma:main_add} to obtain a unique strong solution $X=F(Y)$ to \eqref{eqn:add_SPDE} with $B_t = B_t(Y_t)$, where $F: \mcG \to \mcG$ denotes the solution mapping. For $Y \in \mcG_\tau$, by Lemma \ref{lemma:main_add}
\begin{align*}
  \Phi_{\l,T}(F(Y)) 
  &= \E\int_0^T e^{-Cr} \vp_\l(X_r)dr \le T \sup_{r \in [0,T]} \E  e^{-Cr} \vp_\l(X_r)dr \\
  &\le TC \left( \E \td\vp_1(X_0) + 1 + C\E \int_0^T e^{-Cr} f_r dr + C \E \int_0^T e^{-Cr}\vp_\l(Y_r) dr \right) \\
  &\le TC \left( \E \td\vp_1(X_0) + 1 + \tau \right).
\end{align*}
For $\tau \ge 1 + \E \td\vp_1(X_0)$ we obtain
 \[\Phi_{\l,T}(F(Y)) \le (2CT) \tau.\]
Hence, $F: \mcG_\tau \to \mcG_\tau$ if $T \le \frac{1}{2C}$. Next, we prove that $F: \mcG_\tau \to \mcG_\tau$ is a contraction for $T$ small enough. Let $Y^{(1)},\ Y^{(2)} \in \mcG_\tau$. By Lemma \ref{lemma:main_add} and (A5), we have
\begin{align*}
  \|F(Y^{(1)}) - F(Y^{(2)})\|_\mcH^2 
  &\le \E \sup_{t \in [0,T]} \|F(Y^{(1)})_t - F(Y^{(2)})_t\|_H^2 \\
  &\le C\E \int_0^T \|B_r(Y^{(1)})-B_r(Y^{(2)})\|_{L_2(U,H)}^2 dr \\
  &\le CT \|Y^{(1)}-Y^{(2)}\|_\mcH^2 dr,
\end{align*}
for some constant $C \in \R$. By Banach's fixed point Theorem there exists a unique $X \in \mcG_\tau$ such that $F(X)=X$, i.e. $X$ is a strong solution of \eqref{eqn:SPDE}. Since the choice of $T$ is independent of $\E \td\vp_1(X_0)$, a standard continuation argument yields the existence of a strong solution to \eqref{eqn:SPDE} for all times. Inequality \eqref{eqn:dep_ic} immediately follows from Lemma \ref{lemma:main_add}. This finishes the proof of Theorem \ref{thm:main}.

\section{Limit solutions}\label{sec:limit_soln}

\subsection{Proof of Theorem \ref{thm:limit_soln}}

  Let $X_0 \in L^2(\O,\mcF_0;H)$ and define $G_n := \{ h \in H | \td\vp_1(h) \le n \}$. Since $\td\vp_1$ is convex and l.s.c., $G_n$ is a convex, closed set in $H$. Therefore, the $\|\cdot\|_H$-best-approximation map $P_n: H \to G_n$ is well defined by
    \[ \|h - P_nh\|_H = \inf_{g \in G_n} \|h-g\|_H \]
 and is continuous (cf. \cite{V73}, \cite[Proposition 2.4]{K60}). Set $X_0^n := P_n X_0$. Hence, $\E \td\vp_1(X_0^n) \le n$ and there exists a unique strong solution $X^n$ to \eqref{eqn:SPDE} with initial condition $X_0^n$. Moreover, $\|X_0^n\|_H \le 2\|X_0\|_H$ and $X_0^n(\o) \to X_0(\o)$, for each $\o \in \O$, since the embedding $\bigcup_{n \in \N} G_n = V \subseteq H$ is dense. Thus, $X_0^n \to X_0$ in $L^2(\O;H)$. By Theorem \ref{thm:main} we conclude
    \[ \E \sup_{t \in [0,T]} \|X_t^n-X_t^m\|_H^2 \le C\E\|X_0^n-X_0^m\|_H^2 \to 0,\]
  for $n,m \to \infty$, i.e. $X_t^n$ is a Cauchy sequence in $L^2(\O;C([0,T];H))$ and we obtain a limit solution $X^n \to X \in L^2(\O;C([0,T];H))$. Inequality \eqref{eqn:limit_soln_ctn} obviously follows from \eqref{eqn:dep_ic}.

\subsection{Regularity of limit solutions (proof of Theorem \ref{thm:regularity})}

For two strong solutions $X^{(i)}$, $i=1,2$ to \eqref{eqn:SPDE} on $(0,T]$ with the same initial condition, we can apply It\^o's formula to $\|X^{(1)}-X^{(2)}\|_H^2$ on $[\d,T]$ and use $X \in C([0,T];L^2(\O;H))$ to prove the uniqueness of strong solutions to \eqref{eqn:SPDE} on $(0,T]$.

\begin{lemma}\label{lemma:add_r12_bound}
  Assume (A1)-(A6) and (A4'), $X_0 \in L^2(\O,\mcF_0;H)$ with $\E\vp_1(X_0) < \infty$ and that $B_t(v) \equiv B_t$ is independent of $v\in V$. Denote by $X$ the strong solution to \eqref{eqn:add_SPDE}. Then
    \[ \E \int_0^T \|r^\frac{1}{2}\partial\vp(X_r)\|_H^2 dr \le C \left( \E \|X_0\|_H^2 + 1 + \E\int_0^T  \|B_r\|_{L_{2,\td\vp_1},(p_i)} dr \right), \]
  for some constant $C\in\R$.
\end{lemma}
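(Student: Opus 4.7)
My plan is to lift the deterministic regularization identity $\frac{d}{dt}(t\vp(y)) = \vp(y) - t\|\partial\vp(y)\|_H^2 + t(\partial\vp(y),f)_H$ to the stochastic setting by applying It\^o's formula in the Galerkin approximation $X^n$ from the proof of Lemma \ref{lemma:main_add} -- where $\vp \in C^2(H_n)$ makes It\^o directly available -- and then passing to the limit via the weak convergence $P_n D\vp(X^n) \rightharpoonup \partial\vp(X)$ in $L^2([0,T]\times\O;H)$ established in that proof. The role of the time weight $r$ is to suppress the initial value $\vp(\mcP_{\vp,n}X_0)$ in the estimate, so that after the limit only $\|X_0\|_H^2$ remains on the right-hand side.

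First I apply It\^o to $r \mapsto r\vp(X^n_r)$ using \eqref{eqn:approx_eqn} and the identity $\Vbk{D\vp(X^n_r), P_n D\vp(X^n_r)} = \|P_n D\vp(X^n_r)\|_H^2$. After the usual localization to kill the stochastic integral and taking expectation, this yields
\[
\E\int_0^T r\|P_n D\vp(X^n_r)\|_H^2 dr = \E\int_0^T \vp(X^n_r) dr - \E\bigl[T\vp(X^n_T)\bigr] + \tfrac12 \E\int_0^T r \sum_{k=1}^n D^2\vp(X^n_r)\bigl(\mcP_{\vp,n}B_r\td e_k, \mcP_{\vp,n}B_r\td e_k\bigr) dr.
\]
The boundary term is nonpositive since $\vp \ge 0$; the trace term is bounded by $C(1+\vp(X^n_r)+\|B_r\|_{L_{2,\td\vp_1},(p_i)})$ exactly as in the proof of Lemma \ref{lem:a-priori-bdd}, via (A2), (A6), and \eqref{eqn:approx_bound}. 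Using $r \le T$, this reduces the task to bounding $\E\int_0^T \vp(X^n_r) dr$ by $\E\|X^n_0\|_H^2$ up to noise terms.

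That bound follows from It\^o applied to $\|X^n_r\|_H^2$ together with the stronger coercivity assumption (A4'), which gives $2\Vbk{-D\vp(X^n_r), X^n_r} \le C_2(1+\|X^n_r\|_H^2) - C_1 \vp(X^n_r)$. Combined with $\|\mcP_{\vp,n}B_r\|_{L_2(U,H)}^2 \le 2 \|B_r\|_{L_{2,\td\vp_1},(p_i)}$ (immediate from $\td\vp_1 \ge \tfrac12\|\cdot\|_H^2$ and $p_0=1$) and Gronwall's lemma, this yields
\[
\E\int_0^T \vp(X^n_r) dr \le C\Bigl(\E\|X^n_0\|_H^2 + 1 + \E\int_0^T \|B_r\|_{L_{2,\td\vp_1},(p_i)} dr\Bigr),
\]
and substituting gives the target estimate with $X^n$ in place of $X$.

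The final step is to pass to the limit $n \to \infty$. Multiplying the weak convergence $P_n D\vp(X^n) \rightharpoonup \partial\vp(X)$ in $L^2([0,T]\times\O;H)$ by the bounded weight $r^{1/2}$ preserves weak convergence, so by weak lower semicontinuity of the $L^2$-norm,
\[
\E\int_0^T r\|\partial\vp(X_r)\|_H^2 dr \le \liminf_{n\to\infty} \E\int_0^T r\|P_n D\vp(X^n_r)\|_H^2 dr.
\]
Since $X^n_0 = \mcP_{\vp,n}X_0 \to X_0$ in $L^2(\O;H)$, we have $\E\|X^n_0\|_H^2 \to \E\|X_0\|_H^2$, and the result follows. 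The main subtlety I foresee is ensuring that the right-hand side depends only on $\E\|X_0\|_H^2$ and not $\E\vp(X_0)$: this is precisely the purpose of the strengthened coercivity (A4'), which allows the intermediate integral $\E\int \vp(X^n_r)dr$ to be absorbed into an $\|\cdot\|_H^2$-energy estimate whose initial datum involves $\|X_0\|_H^2$ alone.
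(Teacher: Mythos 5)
Your proposal is correct and follows essentially the same route as the paper: apply It\^o's formula to $t\mapsto t\vp(X^n_t)$ in the Galerkin approximation, control $\E\int_0^T\vp(X^n_r)\,dr$ via It\^o on $\|X^n\|_H^2$ together with (A4'), and pass to the limit using the weak convergence $P_nD\vp(X^n)\rightharpoonup\partial\vp(X)$ and weak lower semicontinuity of the (time-weighted) $L^2$-norm. Your limit passage is in fact slightly more streamlined than the paper's, which instead extracts a weakly convergent subsequence in the weighted space and identifies its limit by restricting to $[\delta,T]$, but the underlying idea is identical.
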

\begin{proof}
  We first consider the approximating solutions $X^n$ to \eqref{eqn:approx_eqn}. It\^o's formula and (A4') yield
  \begin{align*}
    &\E e^{-cT}\|X_T^n\|_H^2 \\
    &\hskip15pt \le \E \|X_0^n\|_H^2 + \E \int_0^T e^{-cr} \bigg( C_2(1+\|X_r^n\|_H^2) - C_1 \vp(X_r^n) + C\|B_r\|_{L_{2,\td\vp_1},(p_i)} - c \|X_r^n\|_H^2 \bigg)dr,
  \end{align*}
  for any $c > 0$. Hence
  \begin{equation}\label{eqn:approx_uniform_vp_bound}
    \E \int_0^T  \vp(X_r^n)dr \le C \left( \E \|X_0^n\|_H^2 + 1 + \E\int_0^T \|B_r\|_{L_{2,\td\vp_1},(p_i)} dr \right).
  \end{equation}    
  Let $F(t,v) := t\vp(v) \in C^2(\R\times H_n)$. By It\^o's formula and proceeding as in Lemma \ref{lem:a-priori-bdd}
  \begin{align*}
     t\vp(X_t^n) 
    &= \int_0^t r D \vp(X_r^n)(-P_n D\vp(X_r^n)) dr + \sum_{k=1}^n \int_0^t r D\vp(X_r^n)(\mcP_{\vp,n}B_r(\td e_k)) d\b^k_r + \int_0^t \vp(X_r^n)dr \nonumber\\
      &\hskip15pt + \frac{1}{2} \sum_{l=1}^n \int_0^t r D^2\vp(X_r^n) (\mcP_{\vp,n}B_r(\td e_l),\mcP_{\vp,n}B_r(\td e_l))dr \\
    &\le - \int_0^t \|P_n r^\frac{1}{2} D\vp(X_r^n)\|_H^2 dr + \sum_{k=1}^n \int_0^t r D \vp(X_r^n)(\mcP_{\vp,n}B_r(\td e_k)) d\b^k_r  \nonumber\\
      &\hskip15pt + C \int_0^t \vp(X_r^n)dr + C \int_0^t (1+\|B_r(v)\|_{L_{2,\td\vp_1},(p_i)}) dr.\nonumber
  \end{align*}
  Hence, applying \eqref{eqn:approx_uniform_vp_bound} yields
  \begin{align*}
     \E \int_0^T \|P_n r^\frac{1}{2}D\vp(X_r^n)\|_H^2 dr
    &\le  C \left( \E \|X_0^n\|_H^2 + 1 + \E\int_0^T  \|B_r(v)\|_{L_{2,\td\vp_1},(p_i)} dr \right).
  \end{align*}
  
  Now we pass to the limit $n\to \infty$. Let $\mcG := L^2([0,T]\times\O;H)$ with norm $\|Z\|_\mcG^2 := \E\int_0^T r \|Z_r\|_H^2 dr$. Then $P_nD\vp(X^n)$ is bounded in $\mcG$. Hence (for a subsequence)
    \[ P_nD\vp(X^n) \rightharpoonup \td\eta\]
  in $\mcG$. For $0<\d<T$, $\|Z\|_\d^2 := \E\int_\d^T r \|Z_r\|_H^2 dr$ is an equivalent norm to $\|\cdot\|_{L^2([\d,T]\times\O;H)}$ on $L^2([\d,T]\times\O;H)$. Therefore, $P_nD\vp(X^n) \rightharpoonup \td\eta$ in $L^2([\d,T]\times\O;H)$. By the proof of Lemma \ref{lemma:main_add} we know $P_nD\vp(X^n_r) \rightharpoonup \partial\vp(X),$ in $L^2([0,T]\times\O;H)$. We conclude $\td\eta = \partial\vp(X)$, dt$\otimes\P$-almost surely. In the proof of Lemma \ref{lemma:main_add} it has been shown $X_0^n \to X_0 \in L^2(\O;H)$. By weak lower semicontinuity of $\|\cdot\|_\mcG$ we conclude
  \begin{align*}
    \E\int_0^T \|r^\frac{1}{2} \partial\vp(X_r) \|_H^2 dr 
    &\le \liminf_{n \to \infty} \E\int_0^T \|P_n r^\frac{1}{2} D\vp(X^n_r)\|_H^2 dr \\
    &\le C \left( \E \|X_0\|_H^2 + 1 + \E\int_0^t  \|B_r\|_{L_{2,\td\vp_1},(p_i)} dr \right),
  \end{align*}
  for some $C \in \R$.
\end{proof}

\begin{lemma}\label{lemma:12_strong}
  Assume (A1)-(A6) and (A4'). Let $X_0 \in L^2(\O,\mcF_0;H)$ such that $\E\vp_1(X_0) < \infty$ and let $X$ be the corresponding strong solution. Then
    \[ \E \int_0^T \|r^\frac{1}{2}\partial\vp(X_r)\|_H^2 dr \le C \left( \E \|X_0\|_H^2 + 1 + \E\int_0^T f_r dr \right), \]
  for some constant $C \in \R$.
\end{lemma}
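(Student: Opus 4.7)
My plan is to reduce Lemma \ref{lemma:12_strong} to the additive-noise estimate Lemma \ref{lemma:add_r12_bound} by regarding the multiplicative driver as a prescribed additive one along the solution. Let $X$ denote the strong solution of \eqref{eqn:SPDE} provided by Theorem \ref{thm:main}; since $\E\vp_1(X_0)<\infty$, Theorem \ref{thm:main} yields $\sup_{t\in[0,T]}\E(\vp(X_t)+\|X_t\|_H^2)<\infty$. Setting $\td B_r:=B_r(X_r)$, (A6) then ensures $\E\int_0^T\|\td B_r\|_{L_{2,\td\vp_1},(p_i)}\,dr<\infty$, so $X$ is the unique strong solution of the additive equation $dX_r=-\partial\vp(X_r)\,dr+\td B_r\,dW_r$, and Lemma \ref{lemma:add_r12_bound} applied with this $\td B$ gives
\[
\E\int_0^T\|r^{1/2}\partial\vp(X_r)\|_H^2\,dr\le C\left(\E\|X_0\|_H^2+1+\E\int_0^T\|B_r(X_r)\|_{L_{2,\td\vp_1},(p_i)}\,dr\right).
\]
By (A6) the last integrand is bounded by $C(f_r+\vp(X_r)+\|X_r\|_H^2)$, so Lemma \ref{lemma:12_strong} follows from the energy-type estimate
\[
\E\int_0^T\vp(X_r)\,dr+\sup_{t\in[0,T]}\E\|X_t\|_H^2\le C\left(\E\|X_0\|_H^2+1+\E\int_0^T f_r\,dr\right).
\]

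To prove this energy estimate, I would apply It\^o's formula in $H$ to $e^{-\alpha t}\|X_t\|_H^2$ for a sufficiently large constant $\alpha$ (justified directly since $\partial\vp(X)\in L^2([0,T]\times\O;H)$ and $B(X)\in L^2([0,T]\times\O;L_2(U,H))$). The drift part contributes $2\Vbk{-\partial\vp(X_r),X_r}\le C_2(1+\|X_r\|_H^2)-C_1\vp(X_r)$ by (A4'), and the quadratic variation contributes $\|B_r(X_r)\|_{L_2(U,H)}^2$. The crucial step is to bound this last term \emph{without} reintroducing $\vp(X_r)$: by (A5),
\[
\|B_r(X_r)\|_{L_2(U,H)}^2\le 2c\|X_r\|_H^2+2\|B_r(0)\|_{L_2(U,H)}^2,
\]
and (A6) at $v=0$ controls the base-point term via its $i=0$, $p_0=1$ summand, which dominates $\tfrac12\|B_r(0)\|_{L_2(U,H)}^2$ and yields $\|B_r(0)\|_{L_2(U,H)}^2\le C(f_r+1)$ (the finite value $\vp(0)$ is absorbed into $C$). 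Choosing $\alpha\ge C_2+2c$ absorbs all $\|X_r\|_H^2$ contributions; taking expectation annihilates the stochastic integral (a true martingale because $\sup_t\E\|X_t\|_H^2<\infty$), and moving $-C_1\vp(X_r)$ to the left delivers both halves of the energy estimate after multiplying out the exponential weight.

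The main obstacle lies precisely in this absorption step: a direct use of (A6) to bound $\|B_r(X_r)\|_{L_2(U,H)}^2$ would produce a contribution $+C\vp(X_r)$ whose constant $C$ is \emph{a priori} unrelated to the strict coercivity constant $C_1$ of (A4'), and so could not be moved to the left. Splitting the noise into its Lipschitz part (controlled by $\|X_r\|_H^2$ via (A5)) and its base-point part (controlled by $f_r$ alone via (A6) at $v=0$, where the $\vp$-dependence trivially disappears) is what allows the estimate to close with constants insensitive to the size of the noise. Chaining this energy bound with the reduction to Lemma \ref{lemma:add_r12_bound} and the use of (A6) above then completes the proof.
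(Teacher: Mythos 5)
Your proof is correct and follows essentially the same route as the paper: view $X$ as the fixed point of the additive-noise solution map so that Lemma \ref{lemma:add_r12_bound} applies with $\td B_r = B_r(X_r)$, then control $\E\int_0^T\|B_r(X_r)\|_{L_{2,\td\vp_1},(p_i)}\,dr$ via (A6) together with the energy bound \eqref{eqn:Ito_A4'} obtained from It\^o's formula for $e^{-cr}\|X_r\|_H^2$ and (A4'). Your extra step of splitting the quadratic-variation term into its Lipschitz part (A5) and its base-point part ((A6) at $v=0$) makes explicit why the $\vp(X_r)$ contribution from the noise does not interfere with the coercivity constant $C_1$, a point the paper passes over silently in \eqref{eqn:Ito_A4'}.
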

\begin{proof}
   Applying It\^o's formula for $\|\cdot\|_H^2$ to $X$ and using (A4') we obtain
   \begin{align}\label{eqn:Ito_A4'}
      \E e^{-cT}\|X_T\|_H^2 
      = \E \|X_0\|_H^2 - \E \int_0^T e^{-cr} (C_1 \vp(X_r) + (c-C)\|X_r\|_H^2 ) dr + \int_0^T e^{-Cr} C dr ,
  \end{align}
  for any $c > 0$ and some $C \in \R$ large enough. Let $\mcH$, $\mcG$, $F: \mcG \to \mcG$ as in Section \ref{subsec:general_noise} and recall that $X$ is constructed as a fixed point of the map $F: \mcG \to \mcG$, mapping $Y \in \mcG$ to the unique solution of \eqref{eqn:add_SPDE} with $B_r = B_r(Y_r)$. By Lemma \ref{lemma:add_r12_bound}, \eqref{eqn:Ito_A4'} and (A6) we have
  \begin{align*}
    \E \int_0^T \|r^\frac{1}{2}\partial\vp(X_r)\|_H^2 dr 
    &\le C \left( \E \|X_0\|_H^2 + 1 + \E\int_0^T \|B_r(X_r)\|_{L_{2,\td\vp_1},(p_i)} dr \right) \\
    &\le C \left( \E \|X_0\|_H^2 + 1 + \E\int_0^T C(f_r + \vp_1(X_r))dr \right) \\
    &\le C \left( \E \|X_0\|_H^2 + 1 + \E\int_0^T f_r dr \right),
  \end{align*}
  for some constant $C \in \R$.
\end{proof}

\begin{proof}[proof of Theorem \ref{thm:regularity}]
  Let $\l > 0$ such that $\vp_\l$ is convex. Since $\vp_\l$ is l.s.c., $\E\int_0^T \vp_\l(\cdot)dt: L^2([0,T]\times\O;H) \to \bar\R$ is convex and weakly l.s.c.. Let $X^n$ denote an approximating sequence of strong solutions for $X$. By \eqref{eqn:Ito_A4'} we have
   \[ \E\int_0^T \vp_\l(X_r)dr \le \liminf_{n \to \infty} \E\int_0^T \vp_\l(X^n_r)dr \le C(\E\|X_0\|_H^2+1). \]
  Let $0 < \delta < T$. Since $\vp_\l(X) \in L^1([0,T]\times\O)$ there is a $\d_0 \in [0,\delta]$ such that $\E \vp_1(X_{\d_0}) < \infty$. Hence, there exists a unique strong solution $X^{(\d_0)}$ to 
  \begin{align}\label{eqn:SPDE_start_d0}
     X_t = X_{\d_0} - \int_{\d_0}^t \partial\vp(X_r) dr + \int_{\d_0}^t B_r(X_r) dW_r.
  \end{align}
  Since every strong solution is a limit solution, the uniqueness of limit solutions implies $X^{(\d_0)} = X, \text{ on } [\d_0,T]$. By Lemma \ref{lemma:12_strong} and Fatou's Lemma we obtain
  \begin{align*}
     \E \int_0^T \|r^\frac{1}{2}\partial\vp(X_r)\|_H^2 dr 
     \le \liminf_{\d_0 \to 0} \E \int_{\d_0}^T \|r^\frac{1}{2}\partial\vp(X_r)\|_H^2 dr 
      \le C \left( \E \|X_{0}\|_H^2 + 1 + \E\int_0^T f_r dr \right).
  \end{align*}
  In particular $\partial\vp(X) \in L^2([\d,T] \times \O;H)$ for all $\d > 0$ and $X$ is a strong solution of \eqref{eqn:SPDE} on $(0,T]$.
\end{proof}

We now comment on the proof of Remark \ref{rmk:limit=varn}. Given a limit solution $X^{(l)}$ we consider the corresponding approximating sequence $X^{(l),n}$ of strong solutions. The additional coercivity and growth assumptions imply enough regularity for $X^{(l),n}$ to apply It\^o's formula (\cite[Theorem 4.2]{RRW07}) for the difference $X^{(l),n}-X^{(v)}$ and the function $\|\cdot\|_H^2$. Taking the limit $n\to \infty$ then yields the assertion.

\section{Applications}\label{sec:applications}
 
We now apply our results to stochastic generalized porous media equations, stochastic generalized reaction diffusion equations and stochastic generalized $p$-Laplace equations, i.e.\ we specify conditions on the coefficients of the drift so that (A1)-(A4) are satisfied and in case of additive or linear multiplicative noise we identify concrete conditions on the coefficients of the noise to satisfy (A5), (A6).

\subsection{Stochastic Generalized Porous Media Equations}\label{sec:SPME}
We consider stochastic generalized porous media equations (SPME) based on the setup established in \cite[Section 3]{RRW07}. Let $(E,\mcB,m)$ be a $\s$-finite measure space with countably generated $\s$-algebra $\mcB$ and let $(L,\mcD(L))$ be a negative-definite, self-adjoint operator on $L^2(m)$ with Ker$(L)=\{0\}$. We consider SPDE of the form
\begin{equation}\label{eqn:SPME}
  dX_t = L \Phi(X_t) dt + B_t(X_t)dW_t,
\end{equation}
where $\Phi \in C^1(\R), \Phi(0) = 0$ with
\begin{align}\label{eqn:Phi_assumptions}
  0 \le \Phi'(r) &\le c_2(\mathbbm{1}_{m(E)<\infty}+|r|^{p-1}),\\
  - c_2 \mathbbm{1}_{m(E)<\infty}+c_1|r|^{p+1} & \le \Psi(r) := \int_0^r \Phi(s)ds,  \nonumber
\end{align}
for some $p \ge 1$ and some constants $c_1 > 0$, $c_2 \in \R$. In particular, the standard SPME on a not necessarily bounded domain will be included in our general framework. 

For comparison, we will now recall existing regularity results for the standard SPME, i.e.\ for the case $L = \D$, $\Phi(r) = |r|^{p-1}r$, $E = \mcO \subseteq \R^d$ open, bounded with smooth boundary $\partial \mcO$. As mentioned in the introduction, the abstract result on invariance of subspaces for SPDE given in \cite{L10} applied to the standard SPME yields $L^2(\mcO)$ invariance, i.e.\ if $X_0 \in L^2(\O;L^2(\mcO))$ and $B \in L^2([0,T]\times\O;L_2(L^2(\mcO)))$ then the variational solution $X$ to the standard SPME is a right continuous process in $L^2(\mcO)$. The same invariance result, but for a more general class of SPME had been obtained in \cite{RW08} by similar methods proving also $|X|^\frac{p-1}{2}X \in L^2([0,T]\times\O;H_0^1(\mcO))$ in case of the standard SPME. For additive noise the same spatial regularity had been proved in \cite{K06} by entirely different methods. For more regular noise, namely $B: H \to L_2(L^2(\mcO);D((-\D)^\g))$ for some $\g > \frac{d}{2}$ it has been shown that $\int_0^\cdot \Phi(X_r)$ is $\P$-a.s.\ weakly continuous in $H_0^1(\mcO)$ for a large class of nonlinearities $\Phi$ \cite{RW08}. To the author's knowledge, highest known regularity for the solution of the standard SPME with additive noise has been obtained in \cite{BGLR10} proving $|X|^{p-1}X \in L^\frac{p+1}{p}([0,T];H_0^{1,\frac{p+1}{p}}(\mcO))$. Our results strengthen these regularity results to 
  \[ |X|^{p-1}X \in L^2([0,T]\times\O;H_0^1(\mcO)).\]
  
We denote $m(fg) := \int_E fg dm$ for $fg \in L^1(m)$, $(f,g) := (f,g)_{L^2(m)}$ and $\|f\| := \|f\|_{L^2(m)}$. Define $\mcD(\mcE) := \mcD(\sqrt{-L})$ and $\mcE(u,v) := (\sqrt{-L}u,\sqrt{-L}v), \text{ for } u,v \in \mcD(\mcE).$ Let $\mcF_e$ denote the abstract completion of $\mcD(\mcE)$ with respect to $\|\cdot\|_{\mcF_e}^2 := \mcE(\cdot,\cdot)$ and choose
  \[ H := \mcF_e^*. \]
From now on we assume
\begin{packed_enum}
 \item[(L1)] There exists a strictly positive function $g \in L^1(m) \cap L^\infty(m)$ such that $\mcF_e \subseteq L^1(g \cdot m)$ continuously.
\end{packed_enum}
Define
  \[ S := L^{p+1}(m), \]
which is a separable, reflexive Banach space. We now informally define $V = S \cap H$, made precise in the following sense:
  \[ V := \{ u \in S|\ \exists c >0 \text{ such that } m(uv) \le c \|v\|_{\mcF_e},\ \forall v \in \mcF_e \cap S^*  \}, \]
with norm $\|\cdot\|_V := \|\cdot\|_S + \|\cdot\|_H.$ Assume
\begin{packed_enum}
 \item[(N1)] $\mcF_e \cap S^* \subseteq \mcF_e$ dense. 
 \item[(N2)] $V \subseteq H,\ S$ dense.
\end{packed_enum}
As shown in \cite{RRW07} (N1) implies $V \subseteq H$ continuously and that $V$ is complete. Since $V$ is isomorphic to a closed subspace of $S \times H$, $V$ is reflexive. We recall 
\begin{proposition}[\cite{RRW07}]
  Let $(\mcE,\mcD(\mcE))$ be a transient Dirichlet space. Then (L1), (N1), (N2) hold.
\end{proposition}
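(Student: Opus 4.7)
The plan is to reduce each of (L1), (N1), (N2) to classical results in the theory of transient Dirichlet forms (Fukushima--Oshima--Takeda). The single key ingredient is the existence of a strictly positive reference function, which encodes transience analytically and then feeds into the subsequent density statements via truncation and cut-off.

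For (L1), I would invoke Fukushima's characterization of transience directly: $(\mcE,\mcD(\mcE))$ is transient if and only if there exists a strictly positive $g_0 \in L^1(m)$ such that $\int_E |u|\, g_0\, dm \le C\|u\|_{\mcF_e}$ for every $u \in \mcF_e$, i.e.\ $\mcF_e$ embeds continuously into $L^1(g_0 \cdot m)$. Replacing $g_0$ by $g := g_0 \wedge 1$ preserves strict positivity and the inequality (since $g \le g_0$), and now $g \in L^1(m) \cap L^\infty(m)$. This yields (L1).

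For (N1), let $u \in \mcF_e$. I would first apply the normal-contraction property of the Dirichlet form to approximate $u$ in the $\mcE$-norm by its truncations $u_n := (-n) \vee u \wedge n \in \mcF_e \cap L^\infty(m)$. To additionally land in $S^* = L^{(p+1)/p}(m)$, I would multiply $u_n$ by cut-off functions supported on the finite $m$-measure sets $\{g \ge 1/k\}$ (finite because $g \in L^1(m)$). Using the standard Dirichlet-form calculus that $\mcD(\mcE)\cap L^\infty(m)$ is stable under multiplication and under normal contractions, such cut-offs can be chosen inside $\mcF_e$. The resulting functions are bounded with essentially finite-measure support, hence lie in every $L^q(m)$, in particular in $S^*$, and converge to $u$ in $\mcE$ as $n,k \to \infty$.

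For (N2), the functions produced in (N1) already lie in $V$: they belong to $S$ and define continuous linear functionals on $\mcF_e \cap S^*$ through the $m$-pairing, using the reference-function inequality from (L1). Density of $V$ in $S = L^{p+1}(m)$ is then the standard approximation of $L^p$-functions by bounded, essentially compactly supported ones. Density of $V$ in $H = \mcF_e^*$ follows from (N1) by a Hahn--Banach argument applied to the pairing $(u,v) \mapsto m(uv)$ on $V \times (\mcF_e \cap S^*)$. The main technical obstacle is in (N1): one must justify that multiplication by $g$-level cut-offs keeps us inside $\mcF_e$, which is the content of the standard truncation lemmas for Dirichlet forms; the remaining steps are essentially bookkeeping, and the detailed verification is carried out in \cite{RRW07}.
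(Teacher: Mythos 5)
The paper itself offers no proof of this proposition --- it is quoted verbatim from \cite{RRW07} --- so there is nothing internal to compare against; your sketch is essentially the standard argument that the cited reference carries out. Part (L1) is exactly right: Fukushima's analytic characterization of transience already supplies a strictly positive bounded $g\in L^1(m)$ with $\int_E |u|g\,dm\le \|u\|_{\mcF_e}$ on all of $\mcF_e$, so the truncation $g_0\wedge 1$ is not even needed. The one step I would not let pass as written is the cut-off argument in (N1): for a general (not necessarily regular) Dirichlet form there is no reason that a cut-off adapted to the level sets $\{g\ge 1/k\}$ belongs to $\mcD(\mcE)\cap L^\infty(m)$, so ``multiplication by $g$-level cut-offs keeps us inside $\mcF_e$'' is precisely the point that can fail, and you have deferred it rather than closed it. The standard repair avoids cut-off multiplication altogether: given $u\in\mcD(\mcE)$, first truncate to $u_n=((-n)\vee u)\wedge n$ (a normal contraction, so $\mcE(u_n)\le\mcE(u)$, with strong convergence recovered via Ces\`aro means of an $\mcE$-weakly convergent subsequence), then approximate by $\alpha G_\alpha\big(u_n\mathbbm{1}_{A_k}\big)$ with $m(A_k)<\infty$; since the sub-Markovian resolvent is simultaneously an $L^1(m)$- and $L^\infty(m)$-contraction, these approximants lie in $\mcD(\mcE)\cap L^1(m)\cap L^\infty(m)$ and hence in every $L^q(m)$, in particular in $S^*=L^{(p+1)/p}(m)$. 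Your treatment of (N2) is fine: functions dominated by a multiple of $g$ lie in $V$ by (L1), they are dense in $L^{p+1}(m)$ by strict positivity of $g$, and density in $H=\mcF_e^*$ follows from (N1) together with reflexivity of the Hilbert space $\mcF_e$.
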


\begin{example}
  A lot of examples of transient Dirichlet spaces, for example with $E$ being a manifold are given in the literature. We now mention several examples for $E = \mcO \subseteq \R^d$ being an open set and $m$ being the Lebesgue measure:
  \begin{packed_enum}
    \item Let $E$ bounded and $L$ be the Friedrichs extension of a symmetric, uniformly elliptic operator of second order with Dirichlet boundary conditions. 
    \item Let $E = \R^d_+ = \{ \xi \in \R^d|\ \xi_1 \ge 0\}$, $L=\D$ with Neumann boundary conditions, $d \ge 2$.
    \item Let $E = \R^d$, $L=(-\D)^\a$ with its standard domain and $\a \in (0,\frac{d}{2}) \cap (0,1]$.
  \end{packed_enum}
  In all these cases, $(\mcD(\mcE),\mcE)$ defines a transient Dirichlet space and thus (L1), (N1), (N2) are satisfied. For details and a proof we refer to \cite{FOT94}.
\end{example}

For $x \in S$ we define
\begin{align*}
  \vp(x) := \int_E \Psi(x(\xi)) dm(\xi), \qquad  \td\vp := \frac{1}{p+1} \|\cdot\|_S^{p+1}.
\end{align*}
(A1) is obviously satisfied. We have $\vp, \td\vp \in C^2(S)$ with
\begin{align*}
   & D\vp(x)(h) = \int_E \Phi(x) h dm, \qquad   D^2\vp(x)(h,g) = \int_E \Phi'(x) hg dm.
\end{align*}
We observe:
\begin{align*}
  \sum_{k=1}^\infty D^2\vp(x)(w_k,w_k) 
  &= \sum_{k=1}^\infty \int_E \Phi'(x) |w_k|^2 dm \\
  &\le C \sum_{k=1}^\infty \int_E (\mathbbm{1}_{m(E)<\infty}+|x|^{p-1})|w_k|^2 dm\\
  &=  \left( \mathbbm{1}_{m(E)<\infty}+\frac{1}{p+1}\|x\|_{p+1}^{p+1} \right) + C \left(\sum_{k=1}^\infty\|w_k\|_{p+1}^2 \right)^\frac{p+1}{2} \\
  &\le  C \left(1+ \vp(x) +\sum_{k=1}^\infty \td\vp(w_k)^\frac{2}{p+1} \right)^\frac{p+1}{2}.
\end{align*}
Hence, (A2) is satisfied with $p_1 = \frac{p+1}{2}$. Obviously $\vp: H \to \bar\R$ is convex, i.e.\ (A3) is satisfied. For (A4), (A4') note that $\Phi(r)r \ge \int_0^r \Phi(s) ds \ge -c_2 \mathbbm{1}_{m(E)<\infty} + c_1|r|^{p+1}$ and
\begin{align*}
  \Vbk{-D\vp(v),v} = -\int_E \Phi(v)v dm \le \int_E c_2 \mathbbm{1}_{m(E) < \infty} - c_1 |v|^{p+1}  dm \le C_2 - C_1 \td\vp(v).
\end{align*}
We obtain:

\begin{theorem}\label{thm:SPME}
  Assume (L1), (N1), (N2). Let $X_0 \in L^2(\O,\mcF_0;H)$ and $B:[0,T] \times\O \times H \to L_2(U,H)$ be progressively measurable and satisfy (A5), (A6). Then there exists a unique strong solution $X \in L^2(\O;C([0,T];H))$ on $(0,T]$ with $X(0)=X_0$ to \eqref{eqn:SPME} with
  \begin{align*}
    \E \int_0^T \|X_t\|_{p+1}^{p+1} + t\|\Phi(X_t)\|_{\mcF_e}^2 dt < \infty.
  \end{align*}    
  The variational solution to \eqref{eqn:SPME} as obtained in \cite[Theorem 3.9]{RRW07} coincides with the strong solution obtained here. If additionally $\E \|X_0\|_{p+1}^{p+1} < \infty$, then $X$ is a strong solution of \eqref{eqn:SPME} with
  \begin{align*}
    &\sup_{t \in [0,T]} \E \|X_t\|_{p+1}^{p+1} + \E \int_0^T \|\Phi(X_t)\|_{\mcF_e}^2 dt< \infty.
  \end{align*}    
\end{theorem}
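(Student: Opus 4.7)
The strategy is to verify the abstract assumptions (A1)--(A6) and (A4') for the concrete SPME setup and then invoke Theorems \ref{thm:main}, \ref{thm:limit_soln}, \ref{thm:regularity} together with Remark \ref{rmk:limit=varn}. Assumptions (A1)--(A3) have already been checked in the computations immediately preceding the theorem, with $p_1 = \frac{p+1}{2}$ in (A2). For (A4') (which implies (A4)) the same chain $\Phi(r)r \ge \Psi(r) \ge -c_2 \mathbbm{1}_{m(E)<\infty} + c_1 |r|^{p+1}$ used in the display just above the statement gives
\[ 2\Vbk{-D\vp(v),v} = -2\, m(\Phi(v) v) \le 2 c_2 m(E) \mathbbm{1}_{m(E)<\infty} - 2 c_1 \|v\|_{p+1}^{p+1} \le C_2 - C_1 \td\vp(v), \]
which combined with the two-sided comparison between $\vp$ and $\td\vp$ (modulo $\|\cdot\|_H^2$ and constants) in (A1) yields (A4'). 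Assumptions (A5), (A6) are direct hypotheses of the theorem.

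Next I would carry out the identification $\partial\vp(v) = -L\Phi(v)$. For $v \in V$, $D\vp(v)$ acts on $h \in V$ by $\Vbk{D\vp(v),h} = m(\Phi(v) h)$, so $v \in \mcD(\partial\vp)$ iff $\Phi(v) \in \mcF_e$, in which case $\partial\vp(v) = -L\Phi(v) \in \mcF_e^* = H$ with
\[ \|\partial\vp(v)\|_H^2 = \|L\Phi(v)\|_{\mcF_e^*}^2 = \mcE(\Phi(v),\Phi(v)) = \|\Phi(v)\|_{\mcF_e}^2. \]
Likewise the lower bound $\vp(v) \ge c_1 \|v\|_{p+1}^{p+1} - c_2 m(E) \mathbbm{1}_{m(E)<\infty}$ shows that $\vp(X) \in L^1([0,T]\times\O)$ is equivalent to $\E \int_0^T \|X_t\|_{p+1}^{p+1} dt < \infty$.

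For $X_0 \in L^2(\O,\mcF_0;H)$, Theorem \ref{thm:limit_soln} produces a unique limit solution $X \in L^2(\O;C([0,T];H))$, and Theorem \ref{thm:regularity} then identifies $X$ as the unique strong solution on $(0,T]$ with $\vp(X) + \|X\|_H^2 \in L^1([0,T]\times\O)$ and $t^{\frac{1}{2}} \partial\vp(X_t) \in L^2([0,T]\times\O;H)$. By the identifications in the previous paragraph these translate to precisely the two stated bounds. For the coincidence with the variational solution I would verify (K), (H1)--(H4) of \cite{RRW07} for our data (this is done in \cite[Theorem 3.9]{RRW07} under (L1), (N1), (N2) and \eqref{eqn:Phi_assumptions}) and apply Remark \ref{rmk:limit=varn}. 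Finally, if $\E\|X_0\|_{p+1}^{p+1}<\infty$, then (A1) gives $\E\vp(X_0) < \infty$, so Theorem \ref{thm:main} applies directly and yields a strong solution on $[0,T]$ with $\vp(X) + \|X\|_H^2 \in L^\infty([0,T];L^1(\O))$ and, as built into the definition of strong solution, $\partial\vp(X) \in L^2([0,T]\times\O;H)$, which translates to the final bound on $\|\Phi(X_t)\|_{\mcF_e}^2$.

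The only substantial work beyond quoting previous results is bookkeeping: matching the abstract Gelfand triple $V \subseteq H \subseteq V^*$ with the concrete objects $L^{p+1}(m)$ and $\mcF_e^*$, and in particular verifying that the abstract domain condition $v \in \mcD(\partial\vp)$ coincides with $\Phi(v) \in \mcF_e$ under (N1), (N2). No essentially new difficulty is expected here.
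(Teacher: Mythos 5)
Your proposal is correct and follows essentially the same route as the paper: the paper's proof of Theorem \ref{thm:SPME} consists precisely of the verification of (A1)--(A4) and (A4') carried out in the displays immediately preceding the statement (with $p_1=\frac{p+1}{2}$ in (A2) and the chain $\Phi(r)r\ge\Psi(r)\ge -c_2\mathbbm{1}_{m(E)<\infty}+c_1|r|^{p+1}$ for (A4')), followed by an appeal to Theorems \ref{thm:main}, \ref{thm:limit_soln}, \ref{thm:regularity} and Remark \ref{rmk:limit=varn}. Your additional bookkeeping identifying $\partial\vp(v)=-L\Phi(v)$, $\|\partial\vp(v)\|_H=\|\Phi(v)\|_{\mcF_e}$ and $\vp(X)\in L^1$ with $\E\int_0^T\|X_t\|_{p+1}^{p+1}\,dt<\infty$ is exactly the translation the paper leaves implicit.
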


\begin{remark}
  Theorem \ref{thm:SPME} can easily be generalized to SPDE of the form
    \[ dX_t = L\left( \sum_{i=1}^N \Phi_i(X_t) \right)dt + B_t(X_t)dW_t, \]
  where $\Phi_i \in C^1(\R)$ are as in \eqref{eqn:Phi_assumptions} with $p = p_i$.
\end{remark}

\begin{remark}
  In the variational setup \cite{RRW07} applied to the standard SPME with Dirichlet boundary conditions, it is not clear in which sense the variational solution satisfies the boundary conditions since these are not explicitly incorporated in the space $V$ nor in $H$. A partial solution to this problem has been suggested in \cite{RW08} by proving $\int_0^t \Phi(X_r) dr \in H_0^1(\mcO)$ almost surely. By Theorem \ref{thm:SPME} the variational solution coincides with the strong solution on $(0,T]$ and thus is shown to satisfy the boundary conditions in the strong sense that $\Phi(X_t) \in H_0^1(\mcO)$ almost surely. 
\end{remark}

We now give explicit examples of noise satisfying (A5), (A6). In \cite{K06} the standard porous medium equation perturbed by additive Wiener noise is considered, i.e. $B(u) := \sum_{k=1}^\infty g_k (\td e_k,u)_U.$ The unique existence of a variational solution $X \in L^2(\O;L^\infty([0,T];L^2(\mcO)))$ satisfying $|X|^\frac{p-2}{2}X \in L^2([0,T]\times\O;H_0^1(\mcO))$ is proven under the assumptions $X_0 \in L^2(\O;L^2(\mcO))$, $g_k \in L^2([0,T]\times\O\times\mcO)$ progressively measurable and either $\mcO \subseteq \R^d$ open, bounded with smooth boundary and $\E \int_0^T \sum_{k=1}^\infty \|g_k\|_2^2 < \infty$ or $\mcO = \R^d$ and $\E \left( \int_0^T \sum_{k=1}^\infty \|g_k\|_2^2 \right)^\frac{p}{2} < \infty$.

\begin{remark}
  Assume $g_k \in L^2([0,T]\times\O;H)$ progressively measurable and 
    \[ \E \int_0^T \left( \sum_{k=1}^\infty \|g_k\|_{p+1}^2 + \|g_k\|_H^\frac{4}{p+1} \right)^\frac{p+1}{2} < \infty.\]
  Then $B(u) := \sum_{k=1}^\infty g_k (\td e_k,u)_U$ satisfies (A5), (A6).
\end{remark}

For stochastic porous media equations, the case of random linear multiplicative space-time perturbations has been intensively studied in the literature (cf.\ e.g.\ \cite{BDPR08,bdpr2009}). In the setup $E = \mcO \subseteq \R^d$ open, bounded with smooth boundary, $d \le 3$, $L=\D$ on $L^2(\mcO)$ with Dirichlet boundary conditions, $\Phi \in C^1(\R)$ as above, $B(v)(h) := \sum_{k=1}^\infty \mu_k \td e_k v (\td e_k,h)_{L^2(\mcO)}$, where $\mu_k \in \R$, $\td e_k$ an orthonormal basis of eigenvectors of $-\D$ on $L^2(\mcO)$, $\l_k$ the corresponding eigenvalues, $S = V = L^{p+1}(\mcO)$ and $H = (H_0^1(\mcO))^*$, the unique existence of variational solutions for initial values $x \in H$ has been proven in \cite{BDPR08} under the assumption $\sum_{k=1}^\infty \l_k^2 \mu_k^2 < \infty.$ For the sake of simplicity and comparability we will now restrict to this setup. Using $\|\td e_k\|_\infty \le C \l_k$ and $\|\td e_k v\|_H^2 \le C \l_k^2 \|x\|_H^2$ (cf. \cite[p. 189]{BDPR08}) it follows

\begin{remark}
   Assume 
      \[ \sum_{k=1}^\infty \mu_k^\frac{4}{p+1} \l_k^\frac{4}{p+1} < \infty. \]
   Then $B(v)(h) := \sum_{k=1}^\infty \mu_k \td e_k v (\td e_k,h)_{L^2(\mcO)}$ satisfies (A5), (A6).
\end{remark}

\subsection{Stochastic Generalized Reaction Diffusion Equations}\label{sec:SRDE}
We will now consider stochastic generalized reaction diffusion equations of the form
\begin{equation}\label{eqn:SRDE}
  dX_t = \left(L X_t + \sum_{i=1}^N f_i(X_t) \right)dt + B_t(X_t)dW_t,
\end{equation}
where the operator $L$ will be specified below and $f_i \in C^1(\R)$, with
\begin{equation}\label{eqn:reaction_cdts}
  \setlength\arraycolsep{0.1em}
 \begin{array}{rclclcl}
  f'_i(t) &\le& c_2,        &\qquad&     |f'_i(t)| &\le& c_2(\mathbbm{1}_{m(E)<\infty}+|t|^{r_i-2}) \\
  c_2 (-\mathbbm{1}_{m(E)<\infty} - |t|^{r_i}) &\le& F_i(t),  &\qquad&     F_i(t), f_i(t)t &\le& -c_1|t|^{r_i} + c_2\mathbbm{1}_{m(E)<\infty},
 \end{array} 
\end{equation}
for some constants $c_1,c_2 > 0$, $r_N \ge ... \ge r_1 \ge 2$, $F_i'=f_i$ and all $t \in \R$. 

For example, let $m(E) < \infty$ (w.l.o.g.\ $N=1$) and $f_1: \R \to \R$ be a polynomial of odd degree with negative leading coefficient, i.e.\ $f_1(t) = \sum_{i=1}^{n} a_i t^{p_i},$ $1 \le p_1 \le ... \le p_n$, $p_n$ odd and $a_{n} < 0$. Then \eqref{eqn:reaction_cdts} is satisfied with $r_1 := p_n+1 \ge 2$. For $m(E) = \infty$ we can consider reaction terms given by a polynomial $ f(t) = \sum_{i=1}^{N} a_i t^{p_i} $ with odd orders $1 \le p_1 \le ... \le p_N$ and with negative coefficients $a_i < 0$, by letting $f_i(t) = a_i t^{p_i}$.

Let $(E,\mcB,m)$ be as in Section \ref{sec:SPME} and $(L,\mcD(L))$ be a non-negative, self-adjoint operator on $L^2(m)$ with corresponding closed, symmetric form $(\mcE,\mcD(\mcE))$. Recall that $\mcF_1 := \mcD(\mcE)$ is a separable Hilbert space with respect to the inner product $\mcE_1(g,h) := \mcE(g,h) + (g,h)$. Define
\begin{align*}
  H &= L^2(m), \\
  S &= V = \mcF_1 \cap L^{r_1}(m) \cap ... \cap L^{r_N}(m).
\end{align*}
Completeness of $V$ follows from the embedding of $V$ into the space of all equivalence classes of $\mcB / \mcB(\R)$-measurable functions $L^0(E,\mcB,m)$ with the topology of convergence in measure. Hence, $V$ is isomorphic to a closed subspace of the reflexive space $\mcF_1 \times L^{r_1}(m) \times ... \times L^{r_N}(m)$ and thus is reflexive. We assume
\begin{packed_enum}
 \item[(N1')] $V \subseteq H$ dense.
\end{packed_enum}
Set
  \[ \vp(v) := \frac{1}{2} \mcE(v,v) - \sum_{i=1}^N \int_E F_i(v) dm, \qquad \td\vp(v) := \frac{1}{2} \mcE(v,v) + \sum_{i=1}^N \int_E |v|^{r_i} dm, \]
for $v \in V$. (A1) immediately follows since the mappings $v \mapsto \int_E F_i(v) dm,\ \int_E |v|^{r_i} dm$ are in $C^2(L^{r_i}(m))$ and by using \eqref{eqn:reaction_cdts}. (A3) follows from $F_i'' \le c_2$. For (A2) we note $\vp ,\td\vp \in C^2(S)$, with
  \begin{align*}
    D\vp(v)(w_1)        &= \mcE(v,w_1) - \sum_{i=1}^N \int_E f_i(v)w_1 dm \\
    D^2\vp(v)(w_1,w_2)  &= \mcE(w_1,w_2) - \sum_{i=1}^N \int_E f_i'(v) w_1 w_2 dm.
  \end{align*}
  We observe
  \begin{align}\label{eqn:reaction_C^2_bound}
    \sum_{k=1}^\infty \int_E f_i'(v) w_k w_k dm 
    &\le \|f_i'(v)\|_{\frac{r_i}{r_i-2}} \left(\sum_{k=1}^\infty\|w_k^2\|_\frac{r_i}{2} \right) \\
    &\le C \left( \|f_i'(v)\|_{\frac{r_i}{r_i-2}}^{\frac{r_i}{r_i-2}} + \left(\sum_{k=1}^\infty \|w_k^2\|_\frac{r_i}{2} \right)^\frac{r_i}{2} \right) \nonumber\\
    &\le C \left( 1+ \td\vp(v) + \left(\sum_{k=1}^\infty \td\vp(w_k)^\frac{2}{r_i} \right)^\frac{r_i}{2} \right),\nonumber
  \end{align}
  for some constant $C > 0$. Hence
  \begin{align*}
      \sum_{k=1}^\infty D^2\vp(x)(w_k,w_k) 
      &\le \sum_{k=1}^\infty \mcE(w_k,w_k) + \sum_{k=1}^\infty \sum_{i=1}^N \int_E f_i'(v) w_k w_k dm \\
      &\le \sum_{k=1}^\infty \mcE(w_k,w_k) + C  \sum_{i=1}^N \left( 1+ \td\vp(v) + \left(\sum_{k=1}^\infty \td\vp(w_k)^\frac{2}{r_i} \right)^\frac{r_i}{2} \right) \\
      &\le C  \left( 1+ \td\vp(v) +  \sum_{i=0}^N\left(\sum_{k=1}^\infty \td\vp(w_k)^\frac{1}{p_i} \right)^{p_i} \right),
  \end{align*}
  for some constant $C > 0$, i.e. (A2) is satisfied with $p_i = \frac{r_i}{2}$. (A4) follows from \eqref{eqn:reaction_cdts}.

  \begin{theorem}\label{thm:SRDE}
     Assume (N1'). Let $X_0 \in L^2(\O,\mcF_0;H)$ and $B:[0,T] \times\O \times H \to L_2(U,H)$ be progressively measurable and satisfy (A5), (A6). Then there exists a unique strong solution $X \in L^2(\O;C([0,T];H))$ on $(0,T]$ with $X(0)=X_0$ to \eqref{eqn:SRDE} with
     \begin{align*}
        &\E \int_0^T \Big( \|X_t\|_{\mcF_1}^2 + \sum_{i=1}^N \|X_t\|_{r_i}^{r_i} \Big) + t \Big( \|LX_t + \sum_{i=1}^N f_i(X_t)\|_2^2 \Big) dt < \infty.
     \end{align*}

     If $\E\left( \|X_0\|_{\mcF_1}^2 + \sum_{i=1}^N \|X_0\|_{r_i}^{r_i}\right) < \infty$,  then $X_t$ is the unique strong solution with
     \begin{align*}
       \sup_{t \in [0,T]} \E \left( \|X_t\|_{\mcF_1}^2 + \sum_{i=1}^N\|X_t\|_{r_i}^{r_i} \right) + \E\int_0^T \|LX_t + \sum_{i=1}^N f_i(X_t)\|_2^2 dt  &< \infty.
     \end{align*}
  \end{theorem}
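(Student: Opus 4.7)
The plan is to verify the remaining hypotheses of the abstract framework in Section \ref{sec:main_result} and then invoke Theorem \ref{thm:main} for the regular initial condition and Theorem \ref{thm:regularity} for the general one. The discussion preceding the theorem has already established (A1)--(A4) for the SRDE setup and identified the drift with $-\partial\vp$ via the formula $\partial\vp(v) = Lv - \sum_i f_i(v)$ on $\mcD(\partial\vp)$; the noise assumptions (A5), (A6) are built into the statement. So the only hypothesis left to check is the strong coercivity (A4').

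To verify (A4'), I would use the pointwise estimates in \eqref{eqn:reaction_cdts}. The bound $f_i(t)t \le -c_1|t|^{r_i} + c_2\mathbbm{1}_{m(E)<\infty}$ gives
\begin{align*}
  2\Vbk{-D\vp(v),v}
   &= -2\mcE(v,v) + 2\sum_{i=1}^N \int_E f_i(v) v\,dm \\
   &\le -2\mcE(v,v) - 2c_1 \sum_{i=1}^N \|v\|_{r_i}^{r_i} + C,
\end{align*}
while $|F_i(t)| \le c_2(|t|^{r_i} + \mathbbm{1}_{m(E)<\infty})$ yields $\vp(v) \le C\bigl(1 + \mcE(v,v) + \sum_i \|v\|_{r_i}^{r_i}\bigr)$. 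Combining the two lines one finds constants $C_1 > 0$ and $C_2 \in \R$ with $2\Vbk{-D\vp(v),v} \le C_2 - C_1\vp(v)$, which in particular implies (A4').

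For the case $\E\bigl(\|X_0\|_{\mcF_1}^2 + \sum_i \|X_0\|_{r_i}^{r_i}\bigr) < \infty$, which is precisely $\E\vp_1(X_0) < \infty$, Theorem \ref{thm:main} directly yields existence, uniqueness, the supremum-in-time bound for $\E(\vp(X_t) + \|X_t\|_H^2)$, and $\E\int_0^T \|\partial\vp(X_r)\|_H^2\,dr < \infty$; combined with the equivalence $\vp \simeq \td\vp$ up to affine terms and the identification $\partial\vp(v) = Lv - \sum_i f_i(v)$ in $L^2(m)$, this gives the claimed a priori bounds. For a general $X_0 \in L^2(\O,\mcF_0;H)$, Theorem \ref{thm:regularity} (applicable thanks to (A4')) produces a unique limit solution, identifies it as the strong solution on $(0,T]$, and provides $\vp(X) + \|X\|_H^2 \in L^1([0,T]\times\O)$ together with $t^{1/2}\partial\vp(X_t) \in L^2([0,T]\times\O;H)$; translated through the same equivalences, these yield the desired integrability of $\|X_t\|_{\mcF_1}^2 + \sum_i \|X_t\|_{r_i}^{r_i}$ and of $t\|LX_t + \sum_i f_i(X_t)\|_2^2$. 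The only genuine subtlety is the pointwise identification of the abstract subgradient with $Lv - \sum_i f_i(v)$ on $\mcD(\partial\vp)$, which follows from the standard observation that continuity of $\mcE(v,\cdot)$ on $L^2(m)$ forces $v \in \mcD(L)$; this, together with the coercivity check above, is the main (if modest) obstacle.
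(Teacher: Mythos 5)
Your proposal is correct and follows essentially the same route as the paper, which likewise reduces Theorem \ref{thm:SRDE} to the abstract results (Theorem \ref{thm:main} for $\E\vp_1(X_0)<\infty$ and Theorems \ref{thm:limit_soln}, \ref{thm:regularity} for general $X_0$) after verifying (A1)--(A4) in the preceding discussion. If anything, you are slightly more careful than the text, which only states that (A4) follows from \eqref{eqn:reaction_cdts}: your explicit derivation of the stronger coercivity (A4') from $f_i(t)t\le -c_1|t|^{r_i}+c_2\mathbbm{1}_{m(E)<\infty}$ and the upper bound on $\vp$ is exactly the missing ingredient needed to invoke Theorem \ref{thm:regularity}, and it matches the computation the paper carries out in the porous-medium case.
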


In the special case $E= \mcO \subseteq \R^n$ open, bounded, $L=\D$ on $L^2(\mcO)$ with Dirichlet boundary conditions we can assume without loss of generality $N=1$ and define $f:=f_1$ with $r:=r_1$.
\begin{remark}
  Let $X_0 \in L^2(\O,\mcF_0;H)$ and $B:[0,T] \times\O \times H \to L_2(U,H)$ be progressively measurable and satisfy (A5), (A6). Let $X_t$ be the unique strong solution on $(0,T]$ of
  \[ dX_t = \left( \D X_t + f(X_t)\right) dt + B_t(X_t)dW_t.\]
  Then
  \begin{align*}
     &\E \int_0^T \|X_t\|_{H_0^1}^2 + \|X_t\|_{r}^{r} + t \left( \|X_t\|_{H^2}^2 + \|X_t\|_{2(r-1)}^{2(r-1)} \right) dt < \infty. 
  \end{align*}
  and if $\E \left( \|X_0\|_{H_0^1}^2 + \|X_0\|_{r}^{r} \right) < \infty$ then $X$ is a strong solution with
  \begin{align*}
    \sup_{t \in [0,T]} \E \left( \|X_t\|_{H_0^1}^2 + \|X_t\|_r^r \right) + \E\int_0^T \|X_t\|_{H^2}^2 + \|X_t\|_{2(r-1)}^{2(r-1)} dt  &< \infty.
  \end{align*}
\end{remark}

\begin{remark}
  Extending our results to additive inhomogeneities is immediate. Applied to equations of the form
    \[ dX_t = (\D X_t + f(t)) dt + B_t dW_t \]
  our results then yield $X \in L^2([0,T]\times\O;H^2(\mcO)) \cap L^\infty([0,T];L^2(\O;H^1_0(\mcO)))$ which has also been shown in \cite[Theorem 3.1]{F90} under the same compatibility relations used here, i.e. $X_0 \in H_0^1(\mcO)$ and $B_t(\td e_k) \in L^2([0,T]\times\O;H_0^1(\mcO))$. Note that in \cite[Proposition 3.3]{F90} these compatibility conditions have been shown to be necessary at least for more regular solutions. 
\end{remark}

For ease of notation and comparability to existing results, we now restrict to the case $E = \mcO \subseteq \R^n$ open, bounded, $L = \D$ on $L^2(\mcO)$ with Dirichlet boundary conditions.

\begin{remark}
 \begin{packed_enum}
   \item Assume $g_k \in L^2([0,T]\times\O \times \mcO)$ progressively measurable and 
	 \[ \E \int_0^T \left( \sum_{k=1}^\infty \|g_k\|_{H_0^1}^\frac{4}{r_N} + \|g_k\|_{r_N}^\frac{2r_1}{r_N} \right)^\frac{r_N}{2} < \infty.\]
	 Then $B_t(u) := \sum_{k=1}^\infty g_k (\td e_k,u)_U$ satisfies (A5), (A6).
   \item Assume
	 \[ \sum_{k=1}^\infty \mu_k^{\frac{4}{r_N} \wedge \frac{2r_1}{r_N}} \l_k^{\frac{4}{r_N} \wedge \frac{2r_1}{r_N}} < \infty,\]
	 where $\mu_k \in \R$, $\td e_k \in H_0^{1,\infty}(\mcO)$ is an orthonormal basis of $L^2(\mcO)$ and $\l_k := \|\td e_k\|_\infty + \|\nabla \td e_k\|_\infty$. Then $B(v)(h) := \sum_{k=1}^\infty \mu_k \td e_k v (\td e_k,h)_{L^2(\mcO)}$ satisfies (A5), (A6).
 \end{packed_enum}
\end{remark}

\subsection{Stochastic Generalized Degenerate $p$-Laplace Equations}

We now consider a class of stochastic generalized degenerated $p$-Laplace equations with reaction term of polynomial growth, thus partially generalizing the results from Section \ref{sec:SRDE}. More explicitly, we consider equations of the form
\begin{equation}\label{eqn:SPLE}
  dX_t = \left( div(\Phi(\nabla X_t)) + \sum_{i=1}^n f_i(X_t) \right) dt + B_t(X_t)dW_t,
\end{equation}
on an open set $\mcO \subseteq \R^d$ with smooth boundary $\partial \mcO$, $f_i \in C^1(\R)$ as in Section \ref{sec:SRDE} with $(E,\mcB,m) = (\mcO,\mcB(\mcO),dx)$ and with one of the following boundary conditions:
\begin{packed_enum}
  \item Dirichlet boundary conditions and $\mcO$ such that Poincar\'e's inequality holds (cf.\ \cite[Theorem 6.28]{A75}).
  \item Neumann boundary conditions and $\mcO$ bounded and convex.
\end{packed_enum}
In case of Neumann boundary conditions, if $m \le r_n$ the convexity of $\mcO$ can be dropped and if $m \in [r_1,r_n]$ also boundedness of $\mcO$ is not needed. We assume, $\Phi = \nabla \Psi: \R^d \to \R^d$ for some convex function $\Psi \in C^2(\R^d;\R)$ satisfying
\begin{align*}
  &\Phi(0) = \nabla \Psi(0) = 0, \\
  &\|D^2\Psi(x)\|_{\R^d\times\R^d} \le c_2(\mathbbm{1}_{|\mcO|<\infty}+|x|^{m-2}), \\
  &-  c_2 \mathbbm{1}_{|\mcO|<\infty} + c_1 |x|^m \le \Psi(x),
\end{align*}
for some $m \ge 2$ and some constants $c_1, c_2 > 0$. This implies $-\Phi(x) \cdot x \le -c_1 |x|^m + c_2\mathbbm{1}_{|\mcO|<\infty}$ and $\Psi(x) \le c_2(\mathbbm{1}_{|\mcO|<\infty}+|x|^m)$, for some constants $c_1, c_2 > 0$. 

In the special case of the standard $p$-Laplace equation (i.e. $\Phi(r) = |r|^{m-2}r$) with low order reaction terms (i.e. $r_i \le m$), Neumann boundary conditions on a bounded, convex set $\mcO$, $B \in L^2([0,T]\times\O;L_2(L^2(\mcO);H^1(\mcO)))$ and $X_0 \in L^2(\O;H^1(\mcO))$, the abstract invariance result obtained in \cite{L10} can be applied to prove that $X_t$ is a right-continuous process in $H^1(\mcO)$ with $\E \sup_{t \in [0,T]} \|X_t\|_{H^1(\mcO)}^2 < \infty$. Applied to the standard $p$-Laplace equation with Neumann boundary conditions for $X_0 \in L^2(\O;H)$ we (partially) strengthen this to 
  \[ \E\int_0^T t\|div (|\nabla X_t|^{m-2} \nabla X_t)\|_2^2dt < \infty,\]
and for $X_0 \in L^m(\O;W^{1,m}(\mcO))$ we get
  \[\sup_{t \in [0,T]} \E \|X_t\|_{W^{1,m}(\mcO)}^m + \E\int_0^T \|div (|\nabla X_t|^{m-2} \nabla X_t)\|_2^2dt < \infty.\]
In the following, we will consider the case of Dirichlet boundary conditions. Neumann boundary conditions can be treated analogously. Let $H = L^2(\mcO)$, $S = W_0^{1,m}(\mcO) \cap L^{r_1}(\mcO) \cap ... \cap L^{r_n}(\mcO)$ and $V = S \cap H$. Let $F_i \in C^2(\R)$ such that $F_i' = f_i$ and set
  \[ \vp(v) := \int_\mcO \Psi(\nabla v) d\xi - \sum_{i=1}^n \int_\mcO F_i(v) d\xi, \quad \td\vp(v) := \frac{1}{m} \int_\mcO |\nabla v|^m d\xi + \sum_{i=1}^n \int_\mcO |v|^{r_i} d\xi, \]
for $v \in S$. We have $\vp,\td\vp \in C^2(S)$ with
\begin{align*}
  D\vp(v)(w) &= \int_\mcO \Phi(\nabla v)(\nabla w) d\xi - \sum_{i=1}^n \int_\mcO f_i(v)w d\xi, \\
  D^2\vp(v)(w_1,w_2) &= \int_\mcO D^2\Psi(\nabla v)(\nabla w_1, \nabla w_2) d\xi - \sum_{i=1}^n \int_\mcO f_i'(v)w_1w_2 d\xi.
\end{align*}
Using this, convexity of $\Psi$ and \eqref{eqn:reaction_cdts}, (A1), (A3), (A4) and (A4') are easily verified. For (A2) we consider the case $m > 2$ (for $m=2$ proceed similarly). First, note
    \begin{align*}
      \sum_{k=1}^\infty \int_\mcO D^2\Psi(\nabla v)(\nabla w_k, \nabla w_k) d\xi 
      &\le \|D^2\Psi(\nabla v)\|_{\frac{m}{m-2}} \sum_{k=1}^\infty \| |\nabla w_k|^2\|_{\frac{m}{2}} \\
      &\le C \left( 1+\|\nabla v\|_m^m + \left( \sum_{k=1}^\infty \|\nabla w_k\|_m^{2} \right)^{\frac{m}{2}} \right) \\
      &\le C \left( 1+\vp(v) + \left( \sum_{k=1}^\infty \td\vp(w_k)^{\frac{2}{m}} \right)^{\frac{m}{2}} \right),
    \end{align*}
  for some constant $C > 0$. Taking into account \eqref{eqn:reaction_C^2_bound} we obtain
    \begin{align*}
        \sum_{k=1}^\infty D^2\vp(x)(w_k,w_k) &\le C \left( 1+\vp(v) + \left( \sum_{k=1}^\infty \td\vp(w_k)^{\frac{2}{m}} \right)^{\frac{m}{2}} +  \sum_{i=1}^n\left(\sum_{k=1}^\infty \td\vp(w_k)^\frac{2}{r_i} \right)^\frac{r_i}{2}\right),
    \end{align*}
    for some constant $C > 0$. Thus, (A2) is satisfied with $\{p_i\} = \{\frac{r_i}{2}\} \cup \{\frac{m}{2} \}$, $N=n+1$.

\begin{theorem}\label{thm:p-Laplace}
  Let $X_0 \in L^2(\O,\mcF_0;H)$ and $B:[0,T] \times\O \times H \to L_2(U,H)$ be progressively measurable and satisfy (A5), (A6). Then there exists a unique strong solution $X \in L^2(\O;C([0,T];H))$ on $(0,T]$ with $X(0)=X_0$ to \eqref{eqn:SPLE} with
  \begin{align*}
    &\E \int_0^T \left( \|X_t\|_{W^{1,m}_0(\mcO)}^m + \sum_{i=1}^n \|X_t\|_{r_i}^{r_i}\right) + t \left(\|div\ \Phi(\nabla X_t) + \sum_{i=1}^n f_i(X_t)\|_2^2\right) dt < \infty.
  \end{align*}
  If $\E\left( \|X_0\|_{W_0^{1,m}(\mcO)}^m + \sum_{i=1}^n \|X_0\|_{r_i}^{r_i}\right) < \infty$,  then $X_t$ is the unique strong solution with
  \begin{align*}
    \sup_{t \in [0,T]} \E \left( \|X_t\|_{W_0^{1,m}(\mcO)}^m + \sum_{i=1}^n\|X_t\|_{r_i}^{r_i} \right) + \E\int_0^T \|div\ \Phi(\nabla X_t) + \sum_{i=1}^n f_i(X_t)\|_2^2 dt  &< \infty.
  \end{align*}
\end{theorem}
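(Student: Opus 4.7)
The plan is to apply Theorems \ref{thm:main}, \ref{thm:limit_soln}, and \ref{thm:regularity} after checking that every abstract hypothesis is in place. The paragraphs immediately preceding the theorem already establish (A1), (A3), (A4), (A4') and explicitly verify (A2) with $\{p_i\}=\{r_i/2\}\cup\{m/2\}$ for the present $\vp$ and $\td\vp$, and the hypothesis on $B$ supplies (A5), (A6). So the abstract framework applies directly; only the translation to concrete norms remains.

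What still needs explicit attention is the identification of $\partial\vp$ as a concrete differential operator. Since $\vp \in C^2(S)$ and $V_0 \subseteq V$ may be chosen to consist of smooth functions respecting the Dirichlet (resp. Neumann) boundary conditions, integration by parts against $w \in V_0$ gives
\[ D\vp(v)(w) = \int_\mcO \Big(- div\,\Phi(\nabla v) - \sum_{i=1}^n f_i(v)\Big) w\, d\xi. \]
Thus $\mcD(\partial\vp) = \{ v \in V : -div\,\Phi(\nabla v) - \sum_{i=1}^n f_i(v) \in L^2(\mcO)\}$, and on this domain $\partial\vp(v) = -div\,\Phi(\nabla v) - \sum_{i=1}^n f_i(v)$, in the $L^2(\mcO)$-sense, with the boundary conditions encoded in the choice of $V$.

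With this identification in hand, the statement splits according to the regularity of $X_0$. When $\E\vp_1(X_0) < \infty$, which by (A1) is equivalent to the condition $\E(\|X_0\|_{W_0^{1,m}}^m + \sum_i \|X_0\|_{r_i}^{r_i}) < \infty$, Theorem \ref{thm:main} yields the unique strong solution, and the a-priori estimate from Lemma \ref{lemma:main_add} (specialized via the identification of $\partial\vp$ and the inequality $\td\vp \le C(1+\vp_1)$ coming from (A1) and (A4')) gives the second regularity display. For general $X_0 \in L^2(\O,\mcF_0;H)$, Theorem \ref{thm:limit_soln} provides a unique limit solution, which Theorem \ref{thm:regularity} (applicable because (A4') holds) upgrades to a strong solution on $(0,T]$ satisfying $\vp(X) \in L^1([0,T]\times\O)$ and $t^{1/2}\partial\vp(X_t) \in L^2([0,T]\times\O;H)$. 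Rewriting these two bounds in terms of the Sobolev and $L^{r_i}$ norms via (A1) yields the first display.

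The main obstacle I anticipate is the precise handling of boundary behaviour in the definition of $\mcD(\partial\vp)$, particularly in the Neumann case on unbounded or non-convex $\mcO$, where the integration by parts argument must be justified by ensuring the boundary trace $\Phi(\nabla v)\cdot \nu$ vanishes in a suitable sense; this is where the geometric restrictions on $\mcO$ enter. Once this technical point is dealt with, the conclusion is a direct translation of the abstract theorems, since all other estimates are already packaged in Theorems \ref{thm:main}, \ref{thm:limit_soln}, and \ref{thm:regularity}.
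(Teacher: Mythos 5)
Your proposal is correct and follows essentially the same route as the paper: the paper's proof of Theorem \ref{thm:p-Laplace} consists precisely of the verification of (A1)--(A4') in the preceding paragraphs (with (A2) holding for $\{p_i\}=\{\frac{r_i}{2}\}\cup\{\frac{m}{2}\}$), followed by a direct application of Theorems \ref{thm:main}, \ref{thm:limit_soln} and \ref{thm:regularity}, with the concrete norms recovered from $\vp$, $\td\vp$ via (A1) and the identification $\partial\vp(v)=-\mathrm{div}\,\Phi(\nabla v)-\sum_i f_i(v)$ on $\mcD(\partial\vp)=\{v\in V:\ D\vp(v)\in H\}$. Your extra care about the Neumann boundary trace is reasonable but not needed for the stated Dirichlet version, which is the case the paper treats explicitly.
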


\begin{remark}
  For the standard $p$-Laplace equation on a bounded set $\mcO \subseteq \R^d$ with Dirichlet boundary conditions (w.l.o.g.\ $n \le 1$) we get $t^\frac{1}{2} \|div(|\nabla X_t|^{m-2}\nabla X_t) \|_2 + t^\frac{1}{2}\|f_1(X_t)\|_2 \in L^2([0,T] \times \O)$ and if $\E\left( \|X_0\|_{W_0^{1,m}(\mcO)}^m + \|X_0\|_{r_1}^{r_1}\right) < \infty$, then $X_t$ is the unique strong solution with $\|div(|\nabla X_t|^{m-2}\nabla X_t)\|_2 + \|f_1(X_t)\|_2 \in L^2([0,T] \times \O)$.
\end{remark}
\hskip5pt
\begin{remark}
  \begin{packed_enum}
   \item Assume $g_k \in L^2([0,T]\times\O \times \mcO)$ progressively measurable and 
	    \[ \E \int_0^T \left( \sum_{k=1}^\infty \|g_k\|_{W^{1,m}_0(\mcO)}^{1 \wedge \frac{2m}{r_N}} + \|g_k\|_{r_N}^{\frac{2}{m} \wedge \frac{4}{r_N}}  \right)^{\frac{r_N}{2} \vee m} < \infty.\]
	 Then $B_t(u) := \sum_{k=1}^\infty g_k (\td e_k,u)_U$ satisfies (A5), (A6).
   \item Assume
	    \[ \sum_{k=1}^\infty (\mu_k \l_k)^{1 \wedge \frac{4}{m} \wedge \frac{4}{r_n}} < \infty, \]
          where $\mu_k \in \R$, $\td e_k \in H_0^{1,\infty}(\mcO)$ is an orthonormal basis of $L^2(\mcO)$ and $\l_k := \|\td e_k\|_\infty + \|\nabla \td e_k\|_\infty$. Then $B(v)(h) := \sum_{k=1}^\infty \mu_k \td e_k v (\td e_k,h)_{L^2(\mcO)}$ satisfies (A5), (A6).
  \end{packed_enum}
\end{remark}


\bibliographystyle{amsalpha.bst}
\bibliography{../latex-refs/refs}   

\providecommand{\bysame}{\leavevmode\hbox to3em{\hrulefill}\thinspace}
\providecommand{\MR}{\relax\ifhmode\unskip\space\fi MR }
\providecommand{\MRhref}[2]{%
  \href{http://www.ams.org/mathscinet-getitem?mr=#1}{#2}
}
\providecommand{\href}[2]{#2}
\begin{thebibliography}{BvNVW08}

\bibitem[Ada75]{A75}
Robert~A. Adams, \emph{Sobolev spaces}, Academic Press [A subsidiary of
  Harcourt Brace Jovanovich, Publishers], New York-London, 1975, Pure and
  Applied Mathematics, Vol. 65.

\bibitem[Bar93]{B93}
Viorel Barbu, \emph{Analysis and control of nonlinear infinite-dimensional
  systems}, Mathematics in Science and Engineering, vol. 190, Academic Press
  Inc., Boston, MA, 1993.

\bibitem[BDPR08]{BDPR08}
Viorel Barbu, Giuseppe Da~Prato, and Michael R{\"o}ckner, \emph{Existence and
  uniqueness of nonnegative solutions to the stochastic porous media equation},
  Indiana Univ. Math. J. \textbf{57} (2008), no.~1, 187--211.

\bibitem[BDPR09]{bdpr2009}
\bysame, \emph{Stochastic porous media equations and self-organized
  criticality}, Comm. Math. Phys. \textbf{285} (2009), no.~3, 901--923.

\bibitem[BGLR11]{BGLR10}
Wolf-J\"urgen Beyn, Benjamin Gess, Paul Lescot, and Michael R{\"o}ckner,
  \emph{The global random attractor for a class of stochastic porous media
  equations}, Comm. Partial Differential Equations \textbf{36} (2011), no.~3,
  446--469.

\bibitem[BvN03]{BvN03}
Zdzislaw Brze{\'z}niak and Jan M. A.~M. van Neerven, \emph{Space-time
  regularity for linear stochastic evolution equations driven by spatially
  homogeneous noise}, J. Math. Kyoto Univ. \textbf{43} (2003), no.~2, 261--303.

\bibitem[BvNVW08]{BvNVW08}
Zdzislaw Brze{\'z}niak, Jan M. A.~M. van Neerven, Mark~C. Veraar, and Lutz
  Weis, \emph{It\^o's formula in {UMD} {B}anach spaces and regularity of
  solutions of the {Z}akai equation}, J. Differential Equations \textbf{245}
  (2008), no.~1, 30--58.

\bibitem[DPZ92]{DPZ92}
Giuseppe Da~Prato and Jerzy Zabczyk, \emph{Stochastic equations in infinite
  dimensions}, Encyclopedia of Mathematics and its Applications, vol.~44,
  Cambridge University Press, Cambridge, 1992.

\bibitem[Fla90]{F90}
Franco Flandoli, \emph{Dirichlet boundary value problem for stochastic
  parabolic equations: compatibility relations and regularity of solutions},
  Stochastics Stochastics Rep. \textbf{29} (1990), no.~3, 331--357.

\bibitem[Fla95]{F95}
\bysame, \emph{Regularity theory and stochastic flows for parabolic {SPDE}s},
  Stochastics Monographs, vol.~9, Gordon and Breach Science Publishers,
  Yverdon, 1995.

\bibitem[F{\=O}T94]{FOT94}
Masatoshi Fukushima, Y{\=o}ichi {\=O}shima, and Masayoshi Takeda,
  \emph{Dirichlet forms and symmetric {M}arkov processes}, de Gruyter Studies
  in Mathematics, vol.~19, Walter de Gruyter \& Co., Berlin, 1994. \MR{1303354
  (96f:60126)}

\bibitem[GS82]{GS82}
V.~A. Gnatyuk and V.~S. Shchirba, \emph{General properties of best
  approximation with respect to a convex continuous function}, Ukrain. Mat. Zh.
  \textbf{34} (1982), no.~5, 608--613, 677.

\bibitem[JR10]{JR10}
Arnulf Jentzen and Michael R\"ockner, \emph{Regularity analysis for stochastic
  differential equations with nonlinear multiplicative trace class noise},
  preprint (2010).

\bibitem[Kim06]{K06}
Jong~U. Kim, \emph{On the stochastic porous medium equation}, J. Differential
  Equations \textbf{220} (2006), no.~1, 163--194.

\bibitem[Kle61]{K60}
Victor Klee, \emph{Convexity of {C}hevyshev sets}, Math. Ann. \textbf{142}
  (1960/1961), 292--304.

\bibitem[KR79]{KR79}
Nicolai~V. Krylov and Boris~L. Rozovski{\u\i}, \emph{Stochastic evolution
  equations}, Current problems in mathematics, {V}ol. 14 ({R}ussian), Akad.
  Nauk SSSR, Vsesoyuz. Inst. Nauchn. i Tekhn. Informatsii, Moscow, 1979,
  pp.~71--147, 256.

\bibitem[Kry94]{K94}
Nicolai~V. Krylov, \emph{A {$W^n_2$}-theory of the {D}irichlet problem for
  {SPDE}s in general smooth domains}, Probab. Theory Related Fields \textbf{98}
  (1994), no.~3, 389--421.

\bibitem[Liu10]{L10}
Wei Liu, \emph{Invariance of subspaces under the solution flow of {SPDE}},
  Infin. Dimens. Anal. Quantum Probab. Relat. Top. \textbf{13} (2010), no.~1,
  87--98.

\bibitem[PR07]{PR07}
Claudia Pr{\'e}v{\^o}t and Michael R{\"o}ckner, \emph{A concise course on
  stochastic partial differential equations}, Lecture Notes in Mathematics,
  vol. 1905, Springer, Berlin, 2007.

\bibitem[PZ07]{PZ07}
Szymon Peszat and Jerzy Zabczyk, \emph{Stochastic partial differential
  equations with {L}\'evy noise}, Encyclopedia of Mathematics and its
  Applications, vol. 113, Cambridge University Press, Cambridge, 2007, An
  evolution equation approach.

\bibitem[Roz90]{R90}
Boris~L. Rozovski{\u\i}, \emph{Stochastic evolution systems}, Mathematics and
  its Applications (Soviet Series), vol.~35, Kluwer Academic Publishers Group,
  Dordrecht, 1990, Linear theory and applications to nonlinear filtering,
  Translated from the Russian by A. Yarkho.

\bibitem[RRW07]{RRW07}
Jiagang Ren, Michael R{\"o}ckner, and Feng-Yu Wang, \emph{Stochastic
  generalized porous media and fast diffusion equations}, J. Differential
  Equations \textbf{238} (2007), no.~1, 118--152.

\bibitem[RS81]{RS81}
Boris~L. Rozovski{\u\i} and A.~Shimizu, \emph{Smoothness of solutions of
  stochastic evolution equations and the existence of a filtering transition
  density}, Nagoya Math. J. \textbf{84} (1981), 195--208.

\bibitem[RW08]{RW08}
Michael R{\"o}ckner and Feng-Yu Wang, \emph{Non-monotone stochastic generalized
  porous media equations}, J. Differential Equations \textbf{245} (2008),
  no.~12, 3898--3935.

\bibitem[Vla73]{V73}
Leonid~P. Vlasov, \emph{Approximative properties of sets in normed linear
  spaces}, Uspehi Mat. Nauk \textbf{28} (1973), no.~6(174), 3--66.

\bibitem[Zha07]{Z07}
Xicheng Zhang, \emph{Regularities for semilinear stochastic partial
  differential equations}, J. Funct. Anal. \textbf{249} (2007), no.~2,
  454--476.

\end{thebibliography}

\end{document}